\documentclass[11pt,a4paper,review]{siamart171218}
\usepackage[utf8]{inputenc}
\usepackage[english]{babel}

\usepackage{graphicx}
\usepackage{fancyhdr}

\usepackage{amsfonts}
\usepackage{amssymb}

\usepackage{amsmath}
\usepackage{amsfonts}
\usepackage{amssymb}
\usepackage{mathtools}
\usepackage{enumitem}
\usepackage[percent]{overpic}
\usepackage{tikz} 
\usepackage{mathrsfs}
\usepackage{xargs}
\usepackage[colorinlistoftodos,prependcaption,textsize=tiny]{todonotes}
\newcommandx{\at}[2][1=]{\todo[linecolor=red,backgroundcolor=red!25,bordercolor=red,#1]{#2}}

\usepackage{algorithm}
\usepackage{algorithmic}

\usepackage{mathtools,booktabs}

\usepackage{varwidth}
\usepackage{hyperref}
\usepackage{cleveref}
\usepackage{cite}

\title{FEAST for differential eigenvalue problems\thanks{Submitted to the editors \today.
\funding{This work is supported by National Science Foundation grant no.~1818757.}}}
\author{Andrew Horning\thanks{Center for Applied Mathematics, Cornell University, Ithaca, NY 14853. (\email{ajh326@cornell.edu})} \and Alex Townsend\thanks{Department of Mathematics, Cornell University, Ithaca, NY  14853. (\email{townsend@cornell.edu})}}
\headers{FEAST for differential eigenproblems}{Andrew Horning and Alex Townsend}
\begin{document}
\maketitle

\begin{abstract}
An operator analogue of the FEAST matrix eigensolver is developed to compute the discrete part of the spectrum of a differential operator in a region of interest in the complex plane. Unbounded search regions are handled with a novel rational filter for the right half-plane. If the differential operator is normal or self-adjoint, then the operator analogue preserves that structure and robustly computes eigenvalues to near machine precision accuracy. The algorithm is particularly adept at computing high-frequency modes of differential operators that possess self-adjoint structure with respect to weighted Hilbert spaces.
\end{abstract}

\begin{keywords}
FEAST, filtered subspace, differential eigenvalue problems, spectral methods
\end{keywords}

\begin{AMS}
34L16, 65F15
\end{AMS}

\section{Introduction}\label{sec:introduction}
In this paper, we consider differential eigenvalue problems posed on the interval $[-1,1]$, i.e.,
\begin{equation}
\label{eqn:differential eigenvalue problem}
\mathcal{L}u=\lambda u, \qquad u(\pm 1) = \cdots = u^{(N/2)}(\pm 1)=0.
\end{equation}
Here, $\mathcal{L}$ is a linear, ordinary differential operator of even order $N$. 
A complex number $\lambda$ and a function $u$ satisfying~\cref{eqn:differential eigenvalue problem} are called an eigenvalue and eigenfunction of $\mathcal{L}$, respectively.  We focus on computing the eigenvalues of $\mathcal{L}$ contained in a simply connected region $\Omega\subset\mathbb{C}$. Throughout, we assume that the boundary $\partial\Omega$ is a rectifiable, simple closed curve and that the spectrum $\lambda(\mathcal{L})$ of $\mathcal{L}$ is discrete, does not intersect $\partial\Omega$, and only finitely many eigenvalues counting multiplicities are in $\Omega$. To simplify discussion about the eigenfunctions of~\cref{eqn:differential eigenvalue problem}, we assume that there are eigenfunctions of $\mathcal{L}$ that form a basis for the invariant subspace of $\mathcal{L}$ associated with $\Omega$.

Since the development of the QR algorithm in the 1960s, the standard methods for solving~\cref{eqn:differential eigenvalue problem} have adopted a ``discretize-then-solve" paradigm. These algorithms first discretize $\mathcal{L}$ to obtain a finite matrix eigenvalue problem and then solve the matrix eigenvalue problem with algorithms from numerical linear algebra~\cite{dongarra1996chebyshev,gary1965computing,goodman1965numerical,orszag_1971}.
Motivated by mathematical software for highly adaptive computations with functions~\cite{driscoll2014chebfun}, we propose an alternative strategy: an algorithm that solves~\cref{eqn:differential eigenvalue problem} by directly manipulating $\mathcal{L}$ at the continuous level and only discretizes functions, not operators. By designing an eigensolver for $\mathcal{L}$ rather than intermediate discretizations, we are able to leverage spectrally accurate approximation schemes for functions while avoiding several pitfalls that plague spectral discretizations of~\cref{eqn:differential eigenvalue problem} (see~\cite{trefethen1987instability},~\cite[Ch.~2]{gheorghiu2014spectral}, and~\cite[Ch.~30]{trefethen2005spectra}). For this reason, we view our proposed algorithms as adopting a ``solve-then-discretize" paradigm. This paradigm has been applied to Krylov methods~\cite{gilles2018continuous}, iterative eigensolvers~\cite{ChebfunExample}, and contour integral projection eigensolvers~\cite{ChebfunExample2} for differential operators. Related techniques for computing with operators on infinite dimensional spaces have been proposed and studied in~\cite{hansen2009infinite,olver2014practical}.

As an example of the advantages of our methodology, consider the simplest possible differential eigenvalue problem given by
\begin{equation}
\label{eqn:oscillator}
-\frac{d^2u}{dx^2}=\lambda u, \qquad u(\pm 1)=0.
\end{equation}
The eigenvalues of~\cref{eqn:oscillator} are $\lambda_k=(k\pi/2)^2$, for $k\geq 1$, and are well-conditioned due to the fact that the eigenfunctions form a complete orthonormal set in the Hilbert space $L^2([-1,1])$~\cite[p.~382]{kato2013perturbation}.  However, spectral discretizations of~\cref{eqn:oscillator} lead to highly non-normal matrices with eigenvalues that are far more ill-conditioned than expected. Due to this ill-conditioning, the accuracy in the computed eigenvalues can be extremely variable and difficult to predict, ranging from a few digits to nearly full precision (see~\cref{fig:motivation}).
\begin{figure}[!tbp]
\label{fig:motivation}
  \centering
  \begin{minipage}[b]{0.45\textwidth}
    \begin{overpic}[width=\textwidth]{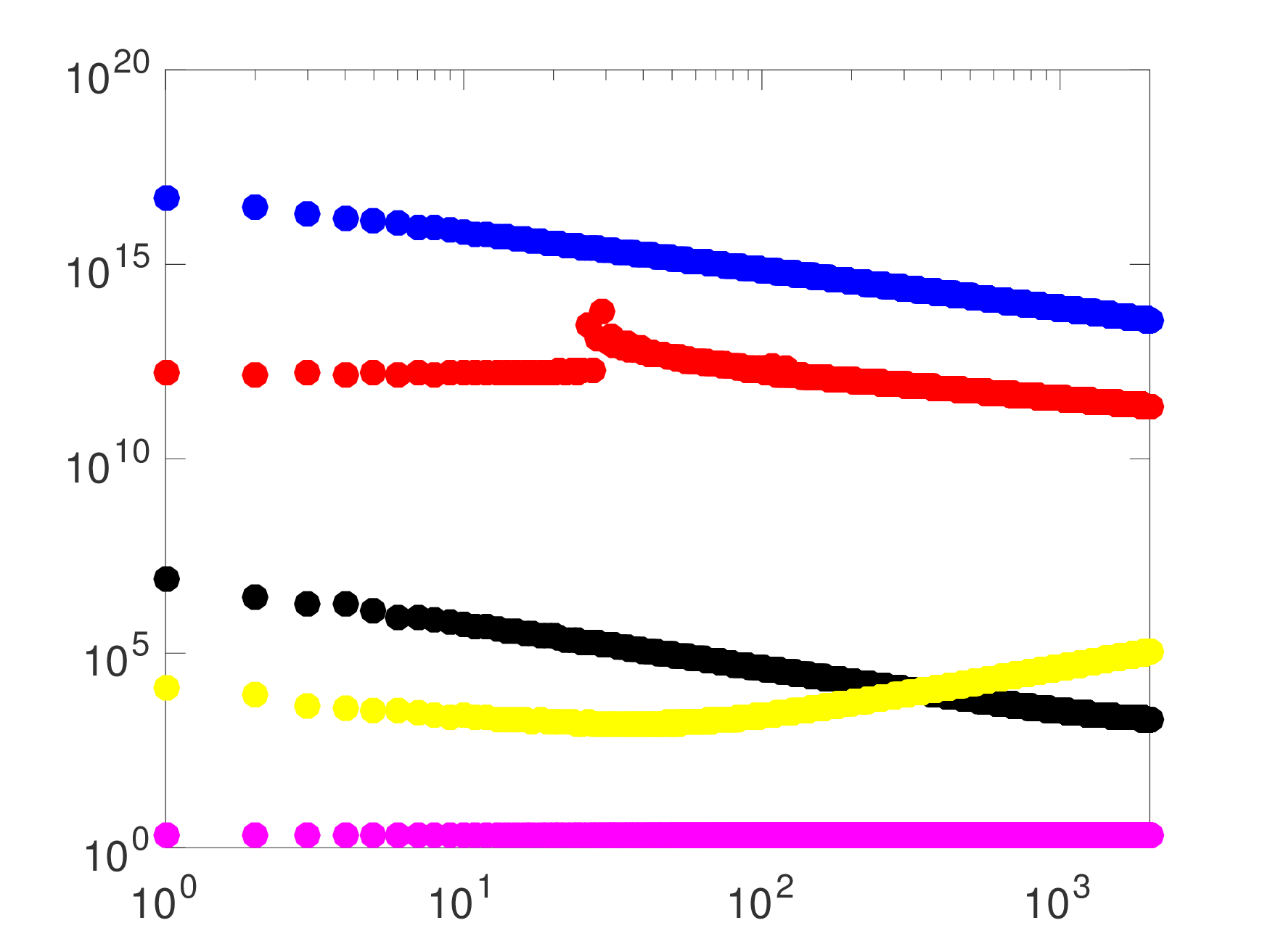}
 	\put (50,-2) {$\displaystyle n$}
 	\end{overpic}
  \end{minipage}
  \hfill
  \begin{minipage}[b]{0.45\textwidth}
    \begin{overpic}[width=\textwidth]{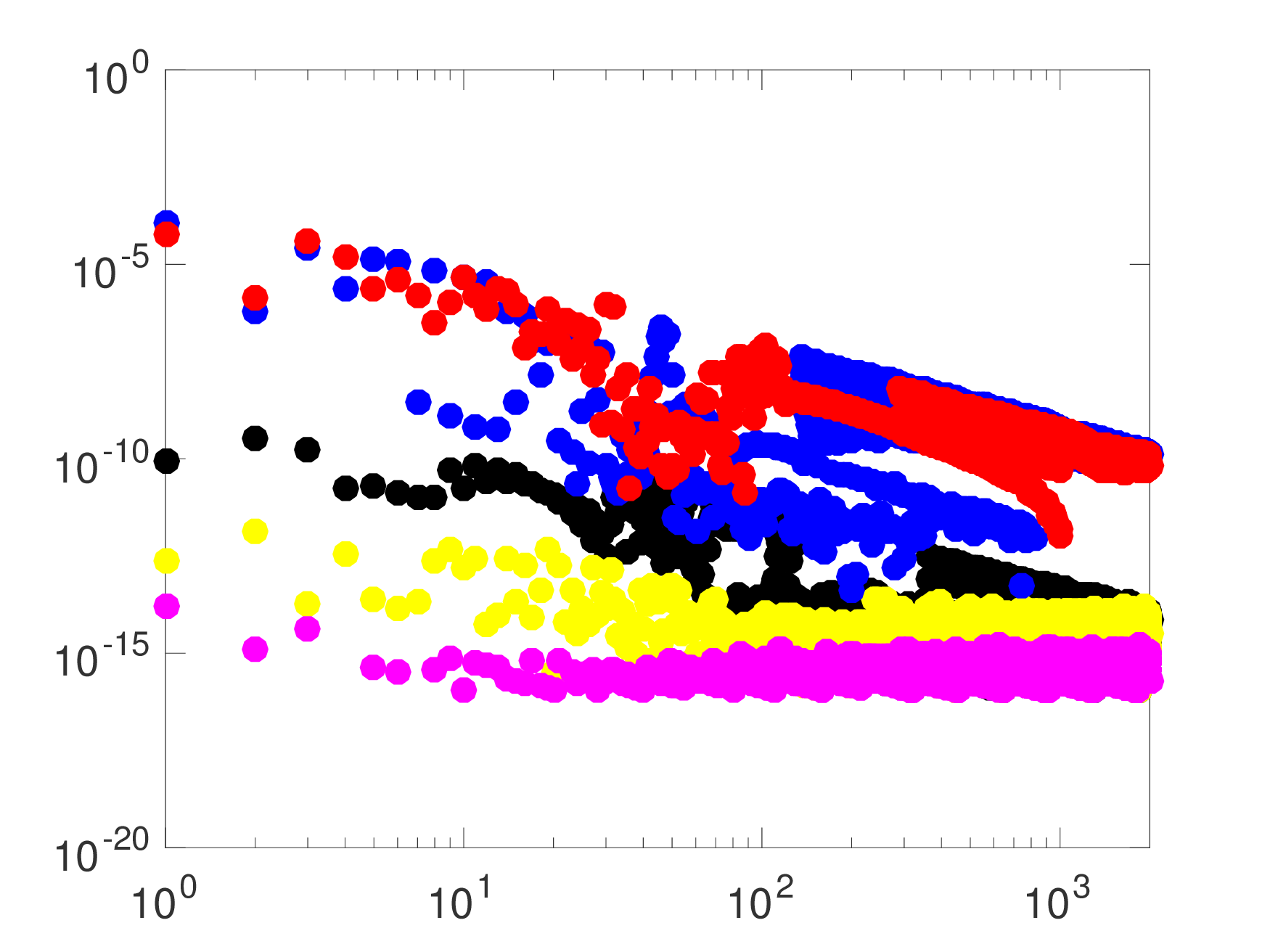}
 	\put (50,-2) {$\displaystyle n$}
 	\end{overpic}
  \end{minipage}
  \caption{Left: The eigenvalue condition numbers~\cite{ANGUAS2019170} for $4000\times 4000$ discretizations of~\cref{eqn:oscillator} obtained by collocation (blue dots), tau (red dots), Chebyshev--Galerkin (black dots), and ultraspherical (yellow dots) spectral methods are compared to the eigenvalue condition numbers (magenta dots) of~\cref{eqn:oscillator}, which are preserved by the operator analogue of FEAST. Right: The relative errors in the first $2000$ eigenvalues of each spectral discretization of~\cref{eqn:oscillator}, computed with a backward stable eigensolver~\cite[p.385]{golub2012matrix}. We observe fluctuations in the relative errors due to the ill-conditioning introduced by using nonsymmetric spectral discretizations of $\mathcal{L}$. In contrast, the relative errors (magenta dots) in the eigenvalues computed by \texttt{contFEAST}, a practical implementation of the operator analogue of FEAST (see~\cref{sec:practical algorithm}), are on the order of machine precision.}
\end{figure}
It is possible to use structure-preserving spectral discretizations to solve~\cref{eqn:oscillator} accurately~\cite{chen2008approximate,shen2007fourierization}. However, there is a lack of literature on designing spectral discretizations of~\cref{eqn:differential eigenvalue problem} when $\mathcal{L}$ is self-adjoint or normal with respect to an inner product other than $L^2([-1,1])$. On the other hand, our solve-then-discretize methodology automatically preserves the normality or self-adjointness of $\mathcal{L}$ with respect to a relevant Hilbert space $\mathcal{H}$, provided that the inner product $(\cdot,\cdot)_\mathcal{H}$ can be evaluated. 

At the heart of our approach is an operator analogue of the FEAST matrix eigensolver, which we briefly outline:
\begin{itemize}[leftmargin=*,noitemsep]
\item[(1)] We construct a basis for the eigenspace $\mathcal{V}$ corresponding to $\Omega$ by sampling the range of the associated spectral projector $\mathcal{P}_\mathcal{V}$.
\item[(2)] We extract an $\mathcal{H}$-orthonormal basis for $\mathcal{V}$ with a continuous analogue of the QR factorization~\cite{trefethen2009householder}.
\item[(3)] We perform a Rayleigh--Ritz projection~\cite[p.~98]{saad1992numerical} of $\mathcal{L}$ onto $\mathcal{V}$ with the orthonormal basis in (2). We solve the resulting matrix eigenvalue problem to obtain approximations to the eigenvalues of $\mathcal{L}$ in $\Omega$.
\end{itemize}

As with the FEAST matrix eigensolver, the spectral projector $\mathcal{P}_\mathcal{V}$ is applied approximately via a quadrature rule approximation. For matrices, this involves solving shifted linear systems, while for differential operators one needs to solve shifted linear differential equations. We solve these differential equations with the ultraspherical spectral method, which is a well-conditioned spectral method that is capable of resolving solutions that exhibit layers, rapid oscillations, and weak corner singularities~\cite{olver2013fast}.

Critically, we discretize basis functions for $\mathcal{V}$ as opposed to discretizing the differential operator $\mathcal{L}$ when solving~\cref{eqn:differential eigenvalue problem}.  While discretizations of a normal operator $\mathcal{L}$ can lead to non-normal matrices, the Rayleigh--Ritz projection described in (3) always leads to a normal matrix eigenvalue problem when $\mathcal{L}$ is normal (see~\cref{thm:condition number,thm:pseudospectra inclusion 1}). In fact, we prove that using a sufficiently good approximate basis for $\mathcal{V}$ does not significantly increase the sensitivity of the eigenvalues when $\mathcal{L}$ is normal (see~\cref{subsec:pseudospectral inclusion} for a precise statement). The result is a highly accurate eigensolver for normal differential operators $\mathcal{L}$, requiring $\mathcal{O}(mMN\log(N)+m^2N+m^3)$ floating point operations, where $m={\rm dim}(\mathcal{V})$ and $M$ and $N$ are the polynomial degrees used to resolve the variable coefficients in $\mathcal{L}$ and the eigenfunctions in $\mathcal{V}$, respectively.

The eigensolver we develop is competitive in the high-frequency regime because it efficiently resolves oscillatory basis functions in $\mathcal{V}$. Furthermore, it handles operators that are self-adjoint or normal with respect to non-standard Hilbert spaces. Finally, our algorithm is parallelizable like the FEAST matrix eigensolver~\cite{OriginalFEAST}. This work is a step towards closing the gap between the frequency regimes that are accessible to computational techniques and asymptotic methods for differential eigenvalue problems posed on higher dimensional domains~\cite{barnett2014fast,Resonances1D}.

The paper is organized as follows.  We begin in~\cref{sec:matrix FEAST} by reviewing FEAST for matrix eigenvalue problems. In~\cref{sec:contFEAST} we introduce an analogue of FEAST for differential operators and show that the operator analogue preserves eigenvalue sensitivity. In~\cref{sec:practical algorithm} we discuss a practical implementation of the operator analogue and provide two examples from Sturm--Liouville theory to illustrate its capabilities in the high-frequency regime. We analyze the convergence and stability of this implementation in~\cref{sec:convergence and stability}.~\Cref{sec:continuous RQI,sec:unbounded regions} develop further applications of the solve-then-discretize paradigm, including an operator analogue of the Rayleigh Quotient iteration and an extension of FEAST to unbounded search regions.

\section{The FEAST matrix eigensolver}\label{sec:matrix FEAST}
\begin{figure}[!tbp]
\label{fig:feast_setup}
  \centering
  \begin{minipage}[b]{0.45\textwidth}
    \begin{overpic}[width=\textwidth]{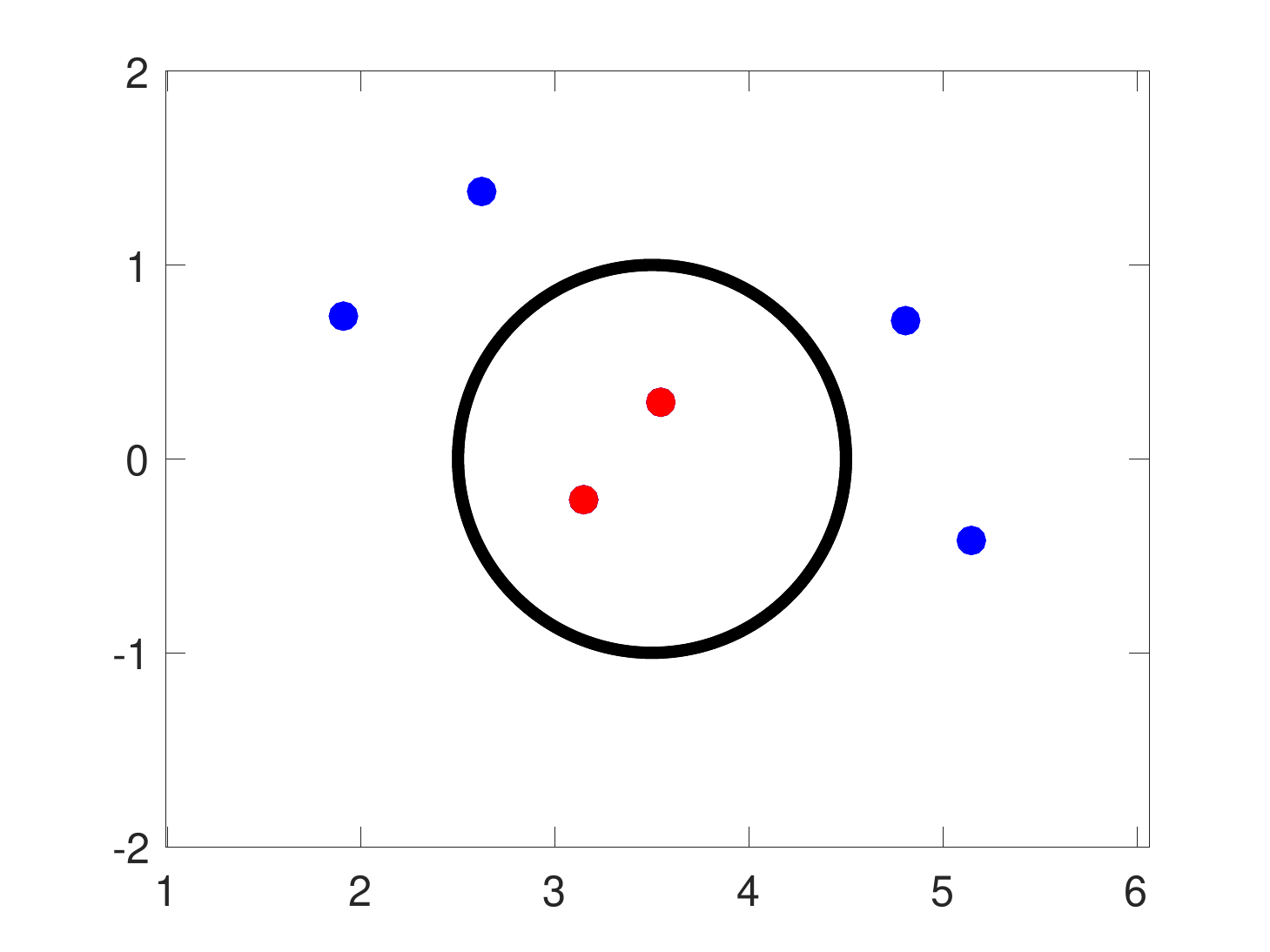}
 	\put (43,-3) {$\displaystyle{\rm Real}(z)$}
 	\put (0,28) {\rotatebox{90} {$\displaystyle{\rm Imag}(z)$}}
 	\put (53,31) {\Large$\displaystyle\Omega$}
	\end{overpic}
  \end{minipage}
  \hfill
  \begin{minipage}[b]{0.45\textwidth}
    \begin{overpic}[width=\textwidth]{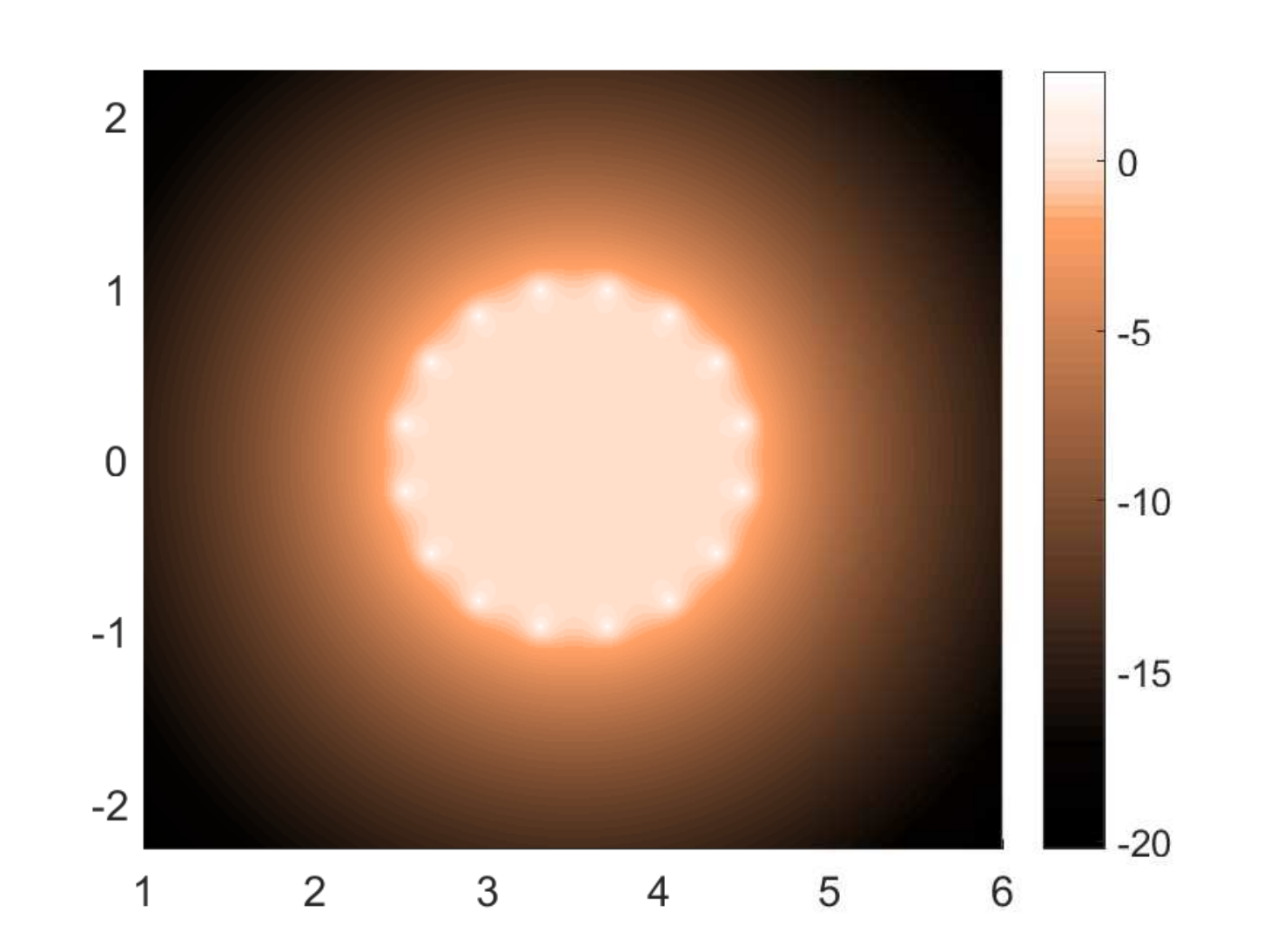}
 	\put (38,-3) {$\displaystyle{\rm Real}(z)$}
 	\put (-1,28) {\rotatebox{90} {$\displaystyle{\rm Imag}(z)$}}
	\end{overpic}
  \end{minipage}
  \caption{Left: FEAST uses an approximation to the spectral projector to compute the eigenvalues that lie inside $\Omega$ (red dots) and project away the eigenvalues outside of $\Omega$ (blue dots). Right: The rational map in~\cref{eqn:approx_FEAST_proj} that approximates the characteristic function on $\Omega$.}
\end{figure}
The FEAST matrix eigensolver uses approximate spectral projection to compute the eigenvalues of a matrix $A\in\mathbb{C}^{n\times n}$ in a region of interest $\Omega\subset\mathbb{C}$~\cite{kestyn2016feast} (see~\cref{fig:feast_setup}). It is usually more computationally efficient than standard eigensolvers when the number of eigenvalues in $\Omega$ is much smaller than $n$. The dominating computational cost of FEAST is solving several independent shifted linear systems, but these can be performed in parallel~\cite{kestyn2016feast}. 

There are three essential ingredients to FEAST:
\begin{itemize}[leftmargin=*,noitemsep]
\item[(i)]\textbf{Spectral projector.} 
Let $\lambda_1,\dots,\lambda_m$ be the eigenvalues of $A$ in $\Omega$ and let $\mathcal{V}$ be the associated invariant subspace of $A$, i.e., $A\mathcal{V}=\mathcal{V}$. 
The \textit{spectral projector} onto $\mathcal{V}$ is defined as
\begin{equation}\label{eqn:spectral projector}
P_\mathcal{V}=\frac{1}{2\pi i}\int_{\partial\Omega} (zI-A)^{-1}\, dz.
\end{equation}
The important fact here is that ${\rm range}(P_\mathcal{V})=\mathcal{V}$ and so $P_\mathcal{V}$ is a projection onto the invariant subspace of $A$~\cite{kato2013perturbation}.

\item[(ii)]\textbf{Basis for $\mathcal{V}$.}
FEAST uses the spectral projector to construct a basis for $\mathcal{V}$. It begins with a matrix $Y\in\mathbb{C}^{n\times m}$ with linearly independent columns that are not in ${\rm ker}(P_\mathcal{V})$, then it computes $Z=P_\mathcal{V}Y$. The columns of $Z$ span $\mathcal{V}$ and a QR factorization of $Z$ provides an orthonormal basis, $Q$, for $\mathcal{V}$.

\item[(iii)]
\textbf{Rayleigh--Ritz projection.} 
Having obtained an orthonormal basis for $\mathcal{V}$, FEAST solves $A_Qx=\lambda x$ using a dense eigensolver~\cite{OriginalFEAST}, where $A_Q=Q^*AQ$.  Since ${\rm range}(Q)=\mathcal{V}$, the eigenvalues of $A_Q$ are the eigenvalues of $A$ that lie inside $\Omega$. When $A_Q$ is diagonalizable, the eigenvectors of $A$ are given by $u_i=Qx_i$, for $i=1,\dots,m$, where $x_1,\dots, x_m$ are the corresponding eigenvectors of $A_Q$.
\end{itemize}

\begin{algorithm}[t]
\textbf{Input:} $A\in\mathbb{C}^{n\times n}$, $\Omega\subset \mathbb{C}$ containing $m$ eigenvalues of $A$, $Y:\mathbb{C}^{n\times m}$. \\
\vspace{-4mm}
\begin{algorithmic}[1]
\STATE Compute $V=\mathcal{P}_\mathcal{V}Y$.
\STATE Compute the QR factorization $V=QR$.
\STATE Compute $A_Q=Q^*AQ$ and solve the eigenvalue problem $A_QX=\Lambda X$ for $\Lambda={\rm diag}\left(\lambda_1,\dots,\lambda_m\right)$ and $X\in\mathbb{C}^{m\times m}$.
\end{algorithmic} \textbf{Output:} Eigenvalues $\lambda_1,\dots,\lambda_m$ in $\Omega$ and eigenfunctions $U=QX$.
\caption{The FEAST algorithm for matrix eigenvalue problems~\cite{OriginalFEAST}. This is often viewed as a single iteration that is repeated to improve the accuracy of the computed eigenvalues and eigenvectors~\cite{SubspaceIter}.}\label{alg:FEAST matrix eigensolver}
\end{algorithm}

For practical computation, FEAST approximates the contour integral in~\cref{eqn:spectral projector} with a quadrature rule. Given a quadrature rule with nodes $z_1,\ldots,z_\ell$ and weights $w_1,\ldots,w_\ell$, one can approximate $P_\mathcal{V}Y$ by
\begin{equation}
\label{eqn:approx_FEAST_proj}
P_\mathcal{V}Y\approx\frac{1}{2\pi i}\sum_{k=1}^\ell w_k(z_kI-A)^{-1}Y.
\end{equation}
In this case, the eigenpairs of $A_Q$ provide approximations to the eigenpairs of $A$, known as Ritz values and vectors~\cite{SubspaceIter}. 
To refine the accuracy of the Ritz values and vectors, a more accurate quadrature rule can be used to compute $P_\mathcal{V}Y$~\cite{SubspaceIter}. FEAST also refines the approximate eigenvalues and eigenvectors by applying $P_\mathcal{V}$ to the $n\times m$ block of approximate eigenvectors using a quadrature rule and iterating (ii) and (iii) until convergence. To fully understand this refinement process, one must examine FEAST through the lens of rational subspace iteration~\cite{SubspaceIter}.

When the dimension $m$ of the invariant subspace $\mathcal{V}$ is unknown, there are several techniques for estimating $m$ and selecting an appropriate value~\cite{kestyn2016feast,di2016efficient,SubspaceIter}. Most of these techniques can be incorporated into the operator analogue of FEAST. Consequently, we assume that $m$ is known throughout the paper and focus on the algorithmic and theoretical aspects of FEAST that are relevant in the operator setting.

Curiously, the originally proposed FEAST algorithm does not compute an orthonormal basis for $\mathcal{V}$ before performing the Rayleigh--Ritz projection~\cite{OriginalFEAST,kestyn2016feast}.\footnote{The FEAST algorithm for non-Hermitian matrices utilizes dual bases for the left and right eigenspaces to improve stability~\cite{kestyn2016feast}.} However, when $Q$ has orthonormal columns and ${\rm range}(Q)$ is an invariant subspace of $A$, then the eigenvalues of the small matrix $Q^*AQ$ are no more sensitive to perturbations than the original eigenvalues of $A$. This highly desirable property follows from an examination of the structure of the left and right invariant subspaces of $Q^*AQ$ or, alternatively, from the $\epsilon$-pseudospectra of $Q^*AQ$~\cite[p. 382]{trefethen2005spectra}. 

\section{An operator analogue of FEAST}\label{sec:contFEAST}
The FEAST matrix algorithm provides a natural starting point for an operator analogue because it provides a recipe to construct a small matrix $Q^*AQ$ whose eigenvalues coincide with those of $A$ inside $\Omega$ and have related invariant subspaces. Moreover, the eigenstructure of $Q^*AQ$ reflects the eigenstructure of $A$ when the columns of $Q$ are orthonormal. As the sensitivity of the eigenvalues of $A$ depends intimately on the structure of the associated eigenvectors, $Q^*AQ$ may be used to compute the desired eigenvalues of $A$ efficiently without sacrificing accuracy. Here, we generalize FEAST so that it constructs a matrix whose eigenvalues coincide with those of a differential operator inside $\Omega$.

\subsection{FEAST for differential operators}\label{subsec:theoretical framework}
In place of a matrix $A$ acting on vectors from $\mathbb{C}^n$, we now consider a differential operator $\mathcal{L}$ acting on functions from a Hilbert space $\mathcal{H}$. As described in~\cref{sec:matrix FEAST}, the FEAST recipe prescribes a spectral projection to compute a basis for $\mathcal{V}$, which is then used for the Rayleigh--Ritz projection to construct a matrix representation on $\mathcal{V}$. Throughout, we require that $\mathcal{L}$ be a closed operator\footnote{An operator $\mathcal{A}:\mathcal{D}(\mathcal{A})\rightarrow\mathcal{H}$ is \textit{closed} if its graph is a closed linear subspace of $\mathcal{H}\times\mathcal{H}$~\cite[p.~165]{kato2013perturbation}.} and that its domain $\mathcal{D}(\mathcal{L})$ is dense in $\mathcal{H}$.

\begin{itemize}[leftmargin=*,noitemsep]
\item[(i)]\textbf{Spectral projector.} 
Although $\mathcal{L}$ is unbounded, the resolvent $(z\mathcal{I-L})^{-1}$ is bounded when $z\not\in\lambda(\mathcal{L})$ and the spectral projector onto $\mathcal{V}$ may be defined via contour integral~\cite[p.~178]{kato2013perturbation}. It is given by
\begin{equation}
\label{eqn:spectral projector for diff op}
\mathcal{P}_\mathcal{V}=\frac{1}{2\pi i}\int_{\partial\Omega}(z\mathcal{I-L})^{-1}\, dz.
\end{equation}

\item[(ii)]\textbf{Basis for $\mathcal{V}$.} With the spectral projector at our disposal, we apply $\mathcal{P}_\mathcal{V}$ to functions $f_1,\dots,f_m$ in $\mathcal{H}\setminus{\rm ker}(\mathcal{P}_\mathcal{V})$ to obtain a basis of functions $v_1,\dots,v_m$ for $\mathcal{V}$. Orthonormalizing $v_1,\dots,v_m$ with respect to the inner product $(\cdot,\cdot)_\mathcal{H}$ on $\mathcal{H}$ gives us an $\mathcal{H}$-orthonormal basis $q_1,\dots,q_m$ for $\mathcal{V}$.

\item[(iii)]\textbf{Rayleigh--Ritz projection.} To compute a matrix representation $L$ of $\mathcal{L}$ on $\mathcal{V}$, the Rayleigh--Ritz projection is performed using the inner product on $\mathcal{H}$. The elements of $L$ are given by $L_{ij}=(q_i,\mathcal{L}q_j)_\mathcal{H}$ for $1\leq i,j \leq m$. The eigenvalues of $L$ are precisely the eigenvalues $\lambda_1,\dots,\lambda_m$ of $\mathcal{L}$ that lie inside $\Omega$. The eigenfunctions of $\mathcal{L}$ are recovered from the eigenvectors $x_1,\dots,x_m$ of $L$ by computing $u_i=\sum_{k=1}^m x_i^{(k)}q_k$, for $i=1,\dots,m$, where $x_i^{(k)}$ is the $k^{th}$ component of $x_i$.
\end{itemize}

To avoid a clutter of indices, we employ the notation of quasimatrices.\footnote{A \textit{quasimatrix} is a matrix whose columns (or rows) are functions defined on an interval $[a,b]$, in contrast to matrices whose columns (or rows) are vectors~\cite[Ch.6]{driscoll2014chebfun}.} If $Q$ is the quasimatrix with columns $q_1,\dots,q_m$, then the matrix $L$ whose elements are $L_{ij}=(q_i,\mathcal{L}q_j)_\mathcal{H}$ in (iii) is expressed compactly in quasimatrix notation as $L=Q^*\mathcal{L}Q$. Here, $Q^*$ is the conjugate transpose of the quasimatrix $Q$ so its rows are complex conjugates of the functions $q_1,\dots, q_m$.

The analogue of FEAST for differential operators is summarized in Algorithm~\ref{alg:continuous analogue} using quasimatrix notation so that it resembles its matrix counterpart.
\begin{algorithm}[t]
\textbf{Input:} $\mathcal{L}:\mathcal{D}(\mathcal{L})\rightarrow\mathcal{H}$, $\Omega\subset \mathbb{C}$  containing $m$ eigenvalues of $\mathcal{L}$, $F:\mathbb{C}^m\rightarrow\mathcal{H}$. \\
\vspace{-4mm}
\begin{algorithmic}[1]
\STATE Compute $V=\mathcal{P}_\mathcal{V}F$.
\STATE Compute $V=QR$, where $Q:\mathbb{C}^m\rightarrow\mathcal{D}(\mathcal{L})\subset\mathcal{H}$ has $\mathcal{H}$-orthonormal columns and $R\in\mathbb{C}^{m\times m}$ is upper triangular.
\STATE Compute $L=Q^*\mathcal{L}Q$ and solve $LX=\Lambda X$ for $\Lambda={\rm diag}[\lambda_1,\dots,\lambda_m]$ and $X\in\mathbb{C}^{m\times m}$.
\end{algorithmic} \textbf{Output:} Eigenvalues $\lambda_1,\dots,\lambda_m$ in $\Omega$ and eigenfunctions $U=QX$.
\caption{An operator analogue of FEAST for differential operators.}\label{alg:continuous analogue}
\end{algorithm}
Keep in mind that Algorithm~\ref{alg:continuous analogue} is a formal algorithm. In general, we cannot apply the spectral projector exactly, nor represent the basis $\mathcal{V}$ exactly with finite memory. A practical implementation is discussed in~\cref{sec:practical algorithm}.

\subsection{Condition number of the Ritz values}\label{subsec:Ritz values}
As illustrated in~\cref{fig:motivation}, the eigenvalues of matrix discretizations of $\mathcal{L}$ can be more sensitive to perturbations than the eigenvalues of $\mathcal{L}$. The advantage of our FEAST approach in~\cref{subsec:theoretical framework} is that the Ritz values, i.e., the eigenvalues of $Q^*\mathcal{L}Q$, are no more sensitive to perturbations than the original eigenvalues of $\mathcal{L}$ when ${\rm range}(Q)$ is an invariant subspace of $\mathcal{L}$.

To see this, let $\lambda$ be a simple eigenvalue of a differential operator $\mathcal{L}$. Let $u,w\in\mathcal{H}$ satisfy $\mathcal{L}u=\lambda u$ and $\mathcal{L}^*w=\overline{\lambda} w$, where $\overline{\lambda}$ denotes the complex conjugate of $\lambda$. The \textit{condition number}\footnote{Although this formula is usually associated with the condition number for a simple eigenvalue of a matrix, its proof extends to our general setting~\cite[Theorem 5]{tisseur2000backward}.} of $\lambda$ is given by~\cite[Theorem 2.3]{anguas2019comparison}
\begin{equation}
\label{eqn:eig condition number}
\kappa_\mathcal{H}(\lambda)=\frac{\lVert u\rVert_\mathcal{H}\lVert w\rVert_\mathcal{H}}{(w,u)_\mathcal{H}}.
\end{equation}
The condition number $\kappa_\mathcal{H}(\lambda)$ quantifies the worst-case first-order sensitivity of $\lambda$ to perturbations of $\mathcal{L}$.  For instance, if we compute $\lambda$ using a backward stable algorithm in floating point arithmetic, we expect to achieve an accuracy of at least $\kappa_\mathcal{H}(\lambda)\epsilon_{\text{mach}}$, where $\epsilon_{\text{mach}}$ is machine precision~\cite[Theorem~15.1]{trefethen1997numerical}.

\begin{theorem}\label{thm:condition number}
Let $\mathcal{L}:\mathcal{D}(\mathcal{L})\rightarrow\mathcal{H}$ be a closed and densely defined operator on a Hilbert space $\mathcal{H}$, $Q:\mathbb{C}^m\rightarrow\mathcal{H}$ be an invariant subspace of $\mathcal{L}$ satisfying $Q^*Q=I$, and $L=Q^*\mathcal{L}Q$. Suppose that $u\in{\rm range}(Q)$ satisfies $\mathcal{L}u=\lambda u$ and $w$ satisfies $\mathcal{L}^*w=\overline{\lambda} w$, where $\mathcal{L}^*$ denotes the adjoint of $\mathcal{L}$ and $\lambda$ is a simple eigenvalue of $\mathcal{L}$ with condition number $\kappa_\mathcal{H}(\lambda)$.  Then, 
\begin{itemize}
\item[1)] $LQ^*u=\lambda Q^*u$ and $L^*Q^*w=\overline{\lambda} Q^*w$,
\item[2)] $(Q^*w,Q^*u)_{\mathbb{C}^m}=(w,u)_\mathcal{H}$, and
\item[3)] $\kappa_{\mathbb{C}^m}(\lambda)\leq\kappa_\mathcal{H}(\lambda)$.
\end{itemize}
\end{theorem}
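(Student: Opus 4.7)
The plan is to prove the three claims in order, exploiting that $u$ already lies in the invariant subspace $\text{range}(Q)$ while $w$ generally does not. The key preliminary observation, used throughout, is that $u \in \text{range}(Q)$ together with $Q^*Q = I$ gives $QQ^*u = u$, so $u = Qx$ with $x = Q^*u$; in particular $Q^*u \neq 0$ because $u \neq 0$.

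For item (1), the first identity is immediate:
\begin{equation*}
L Q^*u \;=\; Q^* \mathcal{L} Q\, Q^* u \;=\; Q^* \mathcal{L} u \;=\; \lambda Q^* u.
\end{equation*}
The second identity requires the dual invariance principle: if $\text{range}(Q)$ is $\mathcal{L}$-invariant, then $\text{range}(Q)^\perp$ is $\mathcal{L}^*$-invariant, since for $v \in \text{range}(Q)$ and $w' \in \text{range}(Q)^\perp \cap \mathcal{D}(\mathcal{L}^*)$ we have $(v,\mathcal{L}^* w')_\mathcal{H} = (\mathcal{L}v, w')_\mathcal{H} = 0$. Decomposing $w = QQ^*w + w_\perp$ with $w_\perp \in \text{range}(Q)^\perp$, we obtain $Q^*\mathcal{L}^* w_\perp = 0$, and hence
\begin{equation*}
L^* Q^* w \;=\; Q^* \mathcal{L}^* Q Q^* w \;=\; Q^*\mathcal{L}^*(w - w_\perp) \;=\; Q^*\mathcal{L}^* w \;=\; \overline{\lambda}\, Q^* w.
\end{equation*}

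Item (2) follows from the quasimatrix adjoint relation: since $u = QQ^*u$,
\begin{equation*}
(w,u)_\mathcal{H} \;=\; (w, Q Q^* u)_\mathcal{H} \;=\; (Q^* w, Q^* u)_{\mathbb{C}^m}.
\end{equation*}
For item (3), items (1) and (2) identify $Q^*u$ and $Q^*w$ as right and left eigenvectors of $L$ for a simple eigenvalue $\lambda$ (simplicity transfers because $(Q^*w, Q^*u)_{\mathbb{C}^m} = (w,u)_\mathcal{H} \neq 0$). Applying~\cref{eqn:eig condition number} to $L$ and using that $\|Q^*u\|_{\mathbb{C}^m} = \|u\|_\mathcal{H}$ (Pythagoras with $u \in \text{range}(Q)$) while $\|Q^*w\|_{\mathbb{C}^m} = \|QQ^*w\|_\mathcal{H} \leq \|w\|_\mathcal{H}$ (because $QQ^*$ is an orthogonal projection) yields
\begin{equation*}
\kappa_{\mathbb{C}^m}(\lambda) \;=\; \frac{\|Q^*u\|_{\mathbb{C}^m}\,\|Q^*w\|_{\mathbb{C}^m}}{|(Q^*w,Q^*u)_{\mathbb{C}^m}|} \;\leq\; \frac{\|u\|_\mathcal{H}\,\|w\|_\mathcal{H}}{|(w,u)_\mathcal{H}|} \;=\; \kappa_\mathcal{H}(\lambda).
\end{equation*}

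The main obstacle I anticipate is not algebraic but domain-theoretic: for the unbounded closed operator $\mathcal{L}$ one must verify that $QQ^*w \in \mathcal{D}(\mathcal{L}^*)$ and that the splitting $w = QQ^*w + w_\perp$ respects $\mathcal{D}(\mathcal{L}^*)$ so that the manipulation $Q^*\mathcal{L}^*w = Q^*\mathcal{L}^* QQ^*w$ is legitimate. This is where the standing hypothesis that $\mathcal{L}$ is closed and densely defined (and the tacit assumption that $Q$ maps into $\mathcal{D}(\mathcal{L})$) must be invoked carefully, using that $\text{range}(Q)$ is finite-dimensional so $QQ^*$ has finite-rank range within $\mathcal{D}(\mathcal{L})$ and $\mathcal{L}^*$ is bounded on this range.
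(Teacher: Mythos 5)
Your proof is correct and follows essentially the same route as the paper's: decompose $w$ into its components in ${\rm range}(Q)$ and ${\rm range}(Q)^\perp$, use that ${\rm range}(Q)^\perp$ is $\mathcal{L}^*$-invariant to get $Q^*\mathcal{L}^*w_\perp=0$, and then compare norms via Pythagoras. Your closing remark on the domain of $\mathcal{L}^*$ flags a genuine gap that the paper's own proof also glosses over (and your statement of Pythagoras is actually cleaner than the paper's, which writes norms where squared norms are meant), so no changes are needed.
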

\begin{proof}
Denote $x=Q^*u$ and $y=Q^*w$. We prove the statements of the theorem in order. 1) Since $u\in {\rm range}(Q)$, we can write $u=Qx$. Then, $\mathcal{L}(Qx)=\lambda (Qx)$ implies that $Q^*\mathcal{L}Qx=\lambda x$ using the fact that $Q^*Q=I$. For the left eigenvector, we write $w=Qy+v$ for some $v\in {\rm range}(Q)^\perp$. Rewriting the adjoint equation for $w$, we find that $\mathcal{L}^*(Qy+v)=\overline{\lambda}(Qy+v)$ and multiplying by $Q^*$ on both sides yields $Q^*\mathcal{L}^*Qy=\overline{\lambda} y$. Here, we have used the fact that $Q^*\mathcal{L}^*v=0$, which holds because $v^*\mathcal{L}Q=0$. 2) By calculating $(w,u)_\mathcal{H}=(Qy+v,Qx)_\mathcal{H}$, we find that $(w,u)_\mathcal{H}=(Qy,Qx)_\mathcal{H}$ because $v\in{\rm range}(Q)^\perp$. Moreover, since $Q^*Q=I$ we conclude that $(w,u)_\mathcal{H}=(y,Q^*Qx)_{\mathbb{C}^m}=(y,x)_{\mathbb{C}^m}$. 3) We know that $\lVert u\rVert_\mathcal{H}=(Qx,Qx)_\mathcal{H}=(x,x)_{\mathbb{C}^m}=\lVert x\rVert_{\mathbb{C}^m}$ and $\lVert w\rVert_\mathcal{H}=(Qy+v,Qy+v)_\mathcal{H}=\lVert y\rVert_{\mathbb{C}^m}+\lVert v\rVert_\mathcal{H}$. Therefore, 
\[
\lVert u\rVert_\mathcal{H}\lVert w\rVert_\mathcal{H}=\lVert x\rVert_{\mathbb{C}^m}\left(\lVert y\rVert_{\mathbb{C}^m}+\lVert v\rVert_\mathcal{H}\right)\geq\lVert x\rVert_{\mathbb{C}^m}\lVert y\rVert_{\mathbb{C}^m}.
\] 
Referring to 2) for equality of the inner products in the denominator, we have 
\[
\kappa_{\mathbb{C}^m}(\lambda)=\frac{\lVert x\rVert_{\mathbb{C}^m}\lVert y\rVert_{\mathbb{C}^m}}{(y,x)_{\mathbb{C}^m}}\leq\frac{\lVert u\rVert_\mathcal{H}\lVert w\rVert_\mathcal{H}}{(w,u)_\mathcal{H}}=\kappa_\mathcal{H}(\lambda),
\] 
which concludes the proof.
\end{proof}

\Cref{thm:condition number} shows that if $\mathcal{L}$ is a normal operator, then $u=w$ and we have $\kappa_{\mathbb{C}^m}(\lambda)=\kappa_\mathcal{H}(\lambda)=1$. For non-normal operators, item 3) of~\cref{thm:condition number} may seem to erroneously indicate that ill-conditioning in the eigenvalues of $\mathcal{L}$ can be overcome by a Rayleigh--Ritz projection. However, when $\mathcal{L}$ is non-normal the spectral projector $\mathcal{P}_\mathcal{V}$ is an oblique projection and computing the basis $Q$ may be itself an ill-conditioned problem. \Cref{thm:condition number} also illustrates why the operator analogue of FEAST leads to a well-conditioned matrix eigenvalue problem when the differential eigenvalue problem is well-conditioned. By computing an $\mathcal{H}$-orthonormal basis for the Rayleigh--Ritz projection, the relevant structure in the eigenspaces of $\mathcal{L}$ and $\mathcal{L}^*$ is preserved. However, the first-order analysis above is limited to simple eigenvalues.

\subsection{Pseudospectra of $\mathbf{Q^*\mathcal{L}Q}$}
To go beyond first-order sensitivity analysis, we compare the $\epsilon$-pseudospectra of $\mathcal{L}$ and $Q^*\mathcal{L}Q$. Fix any $\epsilon>0$ and let $\mathcal{L}:\mathcal{D}(\mathcal{L})\rightarrow\mathcal{H}$ be a closed operator with a domain $\mathcal{D}(\mathcal{L})$ that is dense in $\mathcal{H}$. The $\epsilon$-pseudospectrum of $\mathcal{L}$ is defined as the set~\cite[p.~31]{trefethen2005spectra}
\begin{equation}
\label{eqn:epsilon pseudospectra}
\lambda_\epsilon(\mathcal{L})=\{z\in\mathbb{C}:\lVert(z\mathcal{I-L})^{-1}\rVert_\mathcal{H}>1/\epsilon\}.
\end{equation}
Here, we adopt the usual convention that $\lVert(z\mathcal{I-L})^{-1}\rVert_\mathcal{H}=\infty$ when $z\in\lambda(\mathcal{L})$ so that $\lambda(\mathcal{L})\subset\lambda_\epsilon(\mathcal{L})$. The $\epsilon$-pseudospectrum set of $\mathcal{L}$ bounds the region in which the eigenvalues of the perturbed operator $\mathcal{L+E}$ with $\lVert\mathcal{E}\rVert_\mathcal{H}<\epsilon$ can be found~\cite[p.~31]{trefethen2005spectra}. This means that $\lambda(\mathcal{L+E})\subset\lambda_\epsilon(\mathcal{L})$. In fact, there is an equivalence so that~\cite[p.~31]{trefethen2005spectra} 
\begin{equation}
\label{eqn:pseudospectra and perturbations}
\bigcup_{\lVert\mathcal{E}\rVert_\mathcal{H}<\epsilon}\lambda(\mathcal{L+E})=\lambda_\epsilon(\mathcal{L}).
\end{equation} This allows us to relate the sensitivity of the eigenvalues of $\mathcal{L}$ and $Q^*\mathcal{L}Q$ by comparing the resolvent norms $\lVert(z\mathcal{I-L})^{-1}\rVert_\mathcal{H}$ and $\lVert(zI-Q^*\mathcal{L}Q)^{-1}\rVert_{\mathbb{C}^m}$, respectively.

A useful generalization of~\cref{thm:condition number} is that the $\epsilon$-pseudospectrum of $Q^*\mathcal{L}Q$ is contained in the $\epsilon$-pseudospectrum of $\mathcal{L}$. Since this holds for any $\epsilon>0$, it demonstrates that the eigenvalues (even those with multiplicity) of $Q^*\mathcal{L}Q$ are no more sensitive to perturbations than those of $\mathcal{L}$. This inclusion result is well-known in the matrix case where projection methods are a popular method for approximating the $\epsilon$-pseudospectra of large data-sparse matrices~\cite[p.~381]{trefethen2005spectra}.

\begin{theorem}
\label{thm:pseudospectra inclusion 1}
Let $\mathcal{L}:\mathcal{D}(\mathcal{L})\rightarrow\mathcal{H}$ be a closed and densely defined operator on a Hilbert space $\mathcal{H}$. For a fixed $\epsilon>0$, suppose that $Q:\mathbb{C}^m\rightarrow\mathcal{H}$ satisfies $Q^*Q=I$ and that ${\rm range}(Q)$ is an invariant subspace of $\mathcal{L}$. Then, $\lambda_\epsilon(Q^*\mathcal{L}Q)\subset\lambda_\epsilon(\mathcal{L})$.
\end{theorem}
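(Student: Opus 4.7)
The plan is to reduce the pseudospectral inclusion to a resolvent-norm inequality and then exploit the fact that $\text{range}(Q)$ being $\mathcal{L}$-invariant makes $\mathcal{L}$ and $L := Q^*\mathcal{L}Q$ intertwine via $Q$. Concretely, I will show that for every $z \in \mathbb{C}$,
\[
\lVert (zI - L)^{-1}\rVert_{\mathbb{C}^m} \;\leq\; \lVert (z\mathcal{I} - \mathcal{L})^{-1}\rVert_{\mathcal{H}},
\]
with the usual convention that both sides are $\infty$ at points of the respective spectra. Once this is established, the definition \cref{eqn:epsilon pseudospectra} immediately gives $\lambda_\epsilon(L)\subset\lambda_\epsilon(\mathcal{L})$.

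The first step handles the spectrum. If $z \in \lambda(L)$, then by part 1) of~\cref{thm:condition number} (invariance of $\text{range}(Q)$ under $\mathcal{L}$), $z$ is an eigenvalue of $\mathcal{L}$ as well, so both resolvent norms are $\infty$ and the inequality is trivial. Assume now $z \notin \lambda(L)$. The second step is to record the intertwining identity $\mathcal{L}Q = Q L$. This uses invariance twice: for any $x\in\mathbb{C}^m$, $Qx\in\text{range}(Q)\subset\mathcal{D}(\mathcal{L})$, and $\mathcal{L}Qx\in\text{range}(Q)$, so $\mathcal{L}Qx = QQ^*\mathcal{L}Qx = QLx$. (Note that $QQ^*$ acts as the identity on $\text{range}(Q)$ because $Q^*Q = I$.)

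The third step is the direct computation. Fix $y\in\mathbb{C}^m$ and set $x = (zI - L)^{-1}y$, so that $(zI - L)x = y$. Applying $Q$ and using the intertwining identity gives
\[
(z\mathcal{I}-\mathcal{L})Qx \;=\; zQx - QLx \;=\; Q(zI - L)x \;=\; Qy.
\]
Since $z \in \rho(\mathcal{L})$ in the nontrivial case (otherwise $z\in\lambda(\mathcal{L})$ already and the inequality holds), we may invert to get $Qx = (z\mathcal{I}-\mathcal{L})^{-1}Qy$. The isometry property of $Q$, namely $\lVert Qv\rVert_{\mathcal{H}} = \lVert v\rVert_{\mathbb{C}^m}$ for all $v\in\mathbb{C}^m$ (a consequence of $Q^*Q = I$), then yields
\[
\lVert (zI-L)^{-1}y\rVert_{\mathbb{C}^m} = \lVert Qx\rVert_{\mathcal{H}} \leq \lVert (z\mathcal{I}-\mathcal{L})^{-1}\rVert_{\mathcal{H}} \lVert Qy\rVert_{\mathcal{H}} = \lVert (z\mathcal{I}-\mathcal{L})^{-1}\rVert_{\mathcal{H}}\lVert y\rVert_{\mathbb{C}^m}.
\]
Taking the supremum over unit $y\in\mathbb{C}^m$ gives the desired resolvent bound, and the pseudospectral inclusion follows at once from the defining inequality $\lVert(\cdot)^{-1}\rVert > 1/\epsilon$.

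The only subtle point, and what I expect to be the main obstacle to watch for, is the unboundedness of $\mathcal{L}$: the intertwining $\mathcal{L}Q = QL$ must be verified as an equality of maps from $\mathbb{C}^m$ into $\mathcal{H}$, which requires $\text{range}(Q)\subset\mathcal{D}(\mathcal{L})$. This is implicit in the hypothesis that $\text{range}(Q)$ is an invariant subspace, since invariance $\mathcal{L}\,\text{range}(Q)\subseteq\text{range}(Q)$ only makes sense once one asserts $\text{range}(Q)\subseteq\mathcal{D}(\mathcal{L})$; this is the standard convention and is already used implicitly in~\cref{thm:condition number}. With that proviso the argument proceeds with no further analytic delicacies, since everything after the intertwining step is a finite-dimensional computation on $\mathbb{C}^m$.
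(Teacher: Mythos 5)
Your proof is correct and follows essentially the same route as the paper: both reduce the inclusion to the resolvent-norm inequality $\lVert(zI-Q^*\mathcal{L}Q)^{-1}\rVert_{\mathbb{C}^m}\leq\lVert(z\mathcal{I}-\mathcal{L})^{-1}\rVert_{\mathcal{H}}$ using the isometry $Q^*Q=I$ and the invariance of ${\rm range}(Q)$, your intertwining identity $\mathcal{L}Q=QL$ being exactly what justifies the paper's step $Q^*(z\mathcal{I}-\mathcal{L})^{-1}Q=(Q^*(z\mathcal{I}-\mathcal{L})Q)^{-1}$. The only nit is your citation of part 1) of \cref{thm:condition number} for the claim that $z\in\lambda(Q^*\mathcal{L}Q)$ implies $z\in\lambda(\mathcal{L})$ --- that part states the converse direction --- but the claim follows immediately from your own identity $\mathcal{L}Qx=QLx$ together with $\lVert Qx\rVert_{\mathcal{H}}=\lVert x\rVert_{\mathbb{C}^m}$, so nothing is lost.
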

\begin{proof}
We follow the proof of Proposition~40.1 in~\cite[p.~382]{trefethen2005spectra} for matrices, but with a closed operator. Since $Qx\in\mathcal{H}$ for any $x\in\mathbb{C}^m$, we have that
\begin{equation*}
\begin{aligned}
\lVert(zI-\mathcal{L})^{-1}\rVert_\mathcal{H}=\sup_{f\in\mathcal{H},\lVert f\rVert_\mathcal{H}=1}\lVert(z\mathcal{I-L})^{-1}f\rVert_\mathcal{H}
\geq \max_{x\in\mathbb{C}^m,\lVert x\rVert_{\mathbb{C}^m}=1}\lVert(z\mathcal{I-L})^{-1}Qx\rVert_\mathcal{H}.
\end{aligned}
\end{equation*}
Now, when ${\rm range}(Q)$ is an invariant subspace of $\mathcal{L}$ and $Q^*Q=I$, we have that $\lVert(z\mathcal{I-L})^{-1}Qx\rVert_{\mathcal{H}}=\lVert Q^*(z\mathcal{I-L})^{-1}Qx\rVert_{\mathbb{C}^m}$. Because $QQ^*f=f$ for all $f\in\mathcal{V}$, we can check that $Q^*(z\mathcal{I-L})^{-1}Q=(Q^*(z\mathcal{I-L})Q)^{-1}.$ Since $Q^*Q=I$, it follows that $$\lVert(zI-\mathcal{L})^{-1}\rVert_\mathcal{H}\geq \lVert(Q^*(z\mathcal{I-L})Q)^{-1}\rVert_{\mathbb{C}^m}=\lVert(zI-Q^*\mathcal{L}Q)^{-1}\rVert_{\mathbb{C}^m}.$$ Therefore, $z\in\lambda_\epsilon(\mathcal{L})$ whenever $z\in\lambda_\epsilon(Q^*\mathcal{L}Q)$.
\end{proof}

The inclusion in~\cref{thm:pseudospectra inclusion 1} may be strict, indicating that the eigenvalues of $Q^*\mathcal{L}Q$ are less sensitive than those of $\mathcal{L}$. For example, this may occur when the projection onto ${\rm range}(Q)$ targets a subset of well-conditioned eigenvalues of $\mathcal{L}$. However, we emphasize that ill-conditioning in the eigenvalues of $\mathcal{L}$ cannot be overcome by a Rayleigh--Ritz projection: in general, the situation is complicated~\cite[Ch. 40]{trefethen2005spectra}. 

\Cref{thm:pseudospectra inclusion 1} is useful for studying the stability of Algorithm~\ref{alg:continuous analogue}. If an approximate eigenvalue $\hat\lambda$ of $Q^*\mathcal{L}Q$ is computed with an error tolerance of $\epsilon>0$, then 
\[
\hat\lambda\in\lambda_\epsilon(Q^*\mathcal{L}Q)\subset\lambda_\epsilon(\mathcal{L}).
\] 
From this, we know by~\cref{eqn:pseudospectra and perturbations} that $\hat\lambda$ is an eigenvalue of a perturbed operator $\mathcal{L+E}$ with $\lVert\mathcal{E}\rVert_\mathcal{H}<\epsilon$. In other words, the operator analogue of FEAST, Algorithm~\ref{alg:continuous analogue}, is backward stable. As we see in~\cref{sec:convergence and stability},~\cref{thm:pseudospectra inclusion 1} is also the starting point for a stability analysis when the spectral projection is no longer exact and the Rayleigh--Ritz projection is performed with a matrix $\hat Q$ that only approximates a basis for an invariant subspace of $\mathcal{L}$.

\section{A practical differential eigensolver based on an operator analogue of FEAST}\label{sec:practical algorithm}
The operator analogue of FEAST requires the manipulation of objects such as differential operators, functions, and contour integrals (see Algorithm~\ref{alg:continuous analogue}). For a practical implementation, these objects must be discretized; however, we avoid discretizing $\mathcal{L}$ directly. Instead, we construct polynomial approximants to the basis for $\mathcal{V}$ by approximately solving shifted linear ODEs. These polynomial approximants are used in the Rayleigh--Ritz projection to compute the eigenvalues of $\mathcal{L}$ in $\Omega$.

Let $z_1,\dots,z_\ell$ and $w_1,\dots,w_\ell$ be a set of quadrature nodes and weights to approximate the integral in~\cref{eqn:spectral projector for diff op}. As FEAST does in the matrix case, we approximate $\mathcal{P}_\mathcal{V}$ in~\cref{eqn:spectral projector for diff op} with a quadrature rule as follows:
\begin{equation}
\label{eqn:approximate spectral projector}
\mathcal{\hat P}_\mathcal{V}=\frac{1}{2\pi i}\sum_{k=1}^\ell w_k(z_k\mathcal{I-L})^{-1}.
\end{equation}
If $F$ is a quasimatrix with columns $f_1,\dots,f_m\in\mathcal{H}$, then $\mathcal{P}_\mathcal{V}F$ is replaced by the approximation $\mathcal{\hat P}_\mathcal{V}F=\frac{1}{2\pi i}\sum_{k=1}^\ell w_k(z_k\mathcal{I-L})^{-1}F$. Therefore, to compute $\mathcal{\hat P}_\mathcal{V}F$ we need to solve $\ell$ shifted linear ODEs, each with $m$ righthand sides, i.e.,
\begin{equation}
\label{eqn:shifted linear ODEs}
(z_k\mathcal{I-L})g_{i,k}=f_i, \quad g_{i,k}(\pm 1)=\cdots =g_{i,k}^{(N/2)}(\pm 1)=0,\qquad  1\leq i\leq m.
\end{equation}
If the quasimatrix with columns $g_{1,k},\dots,g_{m,k}$ is denoted by $G_k$ for $k=1,\dots,\ell$, then we have $\mathcal{\hat P}_\mathcal{V}F=\sum_{k=1}^\ell w_kG_k$.

To construct a basis for $\mathcal{V}$, it is important to choose $F$ so that the columns of $\hat V=\mathcal{\hat P}_\mathcal{V}F$ are linearly independent and, if possible, well-conditioned. In analogy with the implementation of matrix FEAST~\cite{OriginalFEAST,kestyn2016feast}, we obtain the columns of $F$ by selecting $m$ band-limited random functions\footnote{A periodic band-limited random function on $[-L,L]$ is a periodic function defined by a truncated Fourier series with random (e.g.~standard Gaussian distributed) coefficients. In the non-periodic setting, the Fourier series is defined on a larger interval $[-L',L']$ and the domain is then truncated~\cite{filip2018smooth}.} on $[-1,1]$~\cite{filip2018smooth}. When $\mathcal{L}$ is a normal operator, this typically yields a well-conditioned basis $\hat V$. 

\begin{algorithm}[t]
\textbf{Input:} $\mathcal{L}:\mathcal{D}(\mathcal{L})\rightarrow\mathcal{H}$, $z_1,\dots,z_\ell\in\partial\Omega$, $w_1,\dots,w_\ell\in\mathbb{C}$, $F:\mathbb{C}^m\rightarrow\mathcal{H}$, $\epsilon>0$. \\
\vspace{-4mm}
\begin{algorithmic}[1]
\REPEAT
	\STATE Solve $(z_k\mathcal{I-L})G_k=F$, $G_k(\pm 1)=0,\dots,G_k^{(N/2)}(\pm 1)=0$, for $k=1,\dots,\ell$.
	\STATE Set $\hat V=\sum_{k=1}^\ell w_kG_k$.
	\STATE Compute $\hat V=\hat Q\hat R$, where $\hat Q:\mathbb{C}^m\rightarrow\mathcal{D}(\mathcal{L})\subset\mathcal{H}$ has $\mathcal{H}$-orthonormal columns and $\hat R\in\mathbb{C}^{m\times m}$ is upper triangular.
	\STATE Compute $\hat L=\hat Q^*\mathcal{L}\hat Q$ and solve $\hat L\hat X=\hat X\hat \Lambda$ for $\hat \Lambda={\rm diag}[\hat\lambda_1,\dots,\hat\lambda_m]$ and $\hat X\in\mathbb{C}^{m\times m}$. Set $F=\hat Q\hat X$.
\UNTIL{$\lVert\mathcal{L}F-F\hat\Lambda\rVert_\mathcal{H}\leq\epsilon\lVert\hat\Lambda\rVert_{\mathbb{C}^m}$.}
\end{algorithmic} \textbf{Output:} $\hat\Lambda$, $\hat U=\hat Q\hat X$.
\caption{A practical algorithm for computing the eigenvalues of a differential operator $\mathcal{L}$, which we refer to as \texttt{contFEAST}.}\label{alg:contFEAST}
\end{algorithm}
We now outline the key implementation details of our differential eigensolver:
\begin{itemize}[leftmargin=*,noitemsep]
\item[(i)]
\textbf{Approximate spectral projection.}
To compute $\hat V=\mathcal{\hat P}_\mathcal{V}F$, we solve the shifted linear ODEs in~\cref{eqn:shifted linear ODEs} using the ultraspherical spectral method~\cite{olver2013fast}. 
The ultraspherical spectral method leads to well-conditioned linear systems and is capable of accurately resolving the functions $g_{i,k}$ even when they are highly oscillatory or have boundary layers.  Moreover, an adaptive QR factorization automatically determines the degree of the polynomial interpolants needed to approximate the functions $g_{i,k}$ to near machine precision~\cite{ApproxFun,olver2014practical}. After accurately resolving the functions $g_{i,k}$, we can accurately compute a basis for $\mathcal{V}$ provided that both the spectral projector is well-conditioned (i.e., $\mathcal{L}$ is not highly non-normal) and the quadrature rule is sufficiently accurate.

\item[(ii)]
\textbf{Orthonormal basis.}
To compute an orthonormal basis $\hat Q$ for the columns of $\hat V$, we compute a QR factorization of the quasimatrix $\hat V$ by Householder triangularization~\cite{trefethen2009householder}. The Householder reflectors are constructed with respect to the inner product $(\cdot,\cdot)_\mathcal{H}$ so that the columns of $\hat Q$ are 
$\mathcal{H}$-orthonormal. 

\item[(iii)]
\textbf{Computing $\mathbf{\hat Q^*\mathcal{L}\hat Q}$.}
To construct the matrix $\hat L=\hat Q^*\mathcal{L}\hat Q$, we apply $\mathcal{L}$ to the columns of $\hat Q$ and then evaluate the action of $\hat Q^*$ on $\mathcal{L}\hat Q$. Multiplying $\hat Q^*$ with $\mathcal{L}\hat Q$ involves taking the inner products
\begin{equation}
\label{eqn:moment matrix}
\hat L_{ij}=(\hat q_i,\mathcal{L}\hat q_j)_\mathcal{H}, \qquad 1\leq i,j\leq m,
\end{equation}
where $\hat q_i$ denotes the $i$th column of $\hat Q$. The eigenvalues $\hat\lambda_1,\dots,\hat\lambda_m$ and eigenvectors $\hat x_1,\dots,\hat x_m$ of the matrix $\hat L$ are computed using the QR algorithm~\cite[p.~385]{golub2012matrix}.
\end{itemize}

Critically, the inner product $(\cdot,\cdot)_\mathcal{H}$ used in the QR factorization of $\hat V$ and the construction of $\hat Q^*\mathcal{L}\hat Q$ depends on the choice of the Hilbert space $\mathcal{H}$. As long as we are able to evaluate $(\cdot,\cdot)_\mathcal{H}$, we can exploit the fact that $\mathcal{L}$ is self-adjoint or a normal operator with respect to $(\cdot,\cdot)_\mathcal{H}$ so that we can accurately compute the eigenvalues of $\mathcal{L}$ in $\Omega$ (see~\cref{thm:pseudospectra inclusion 2}). For this reason, our algorithm is able to accurately compute the eigenvalues and eigenfunctions of differential operators that are self-adjoint with respect to non-standard Hilbert spaces (see~\cref{subsec:high-frequency eigenmodes}).

Evaluating the inner product $(\cdot,\cdot)_\mathcal{H}$ usually means computing an integral, which we approximate with a quadrature rule. For example, if $\mathcal{H}=L^2([-1,1])$,
\[
(f,g)_{L^2([-1,1])}=\int_{-1}^1\overline{f(x)} g(x)\, dx.
\]
Given the Gauss--Legendre quadrature nodes $x_1,\dots,x_p$ and weights $w_1,\dots,w_p$ on $[-1,1]$, then one uses the approximation~\cite{bogaert2014iteration}
\[
(f,g)_{L^2([-1,1])}\approx\sum_{k=1}^p w_k \overline{f(x_k)}g(x_k).
\]

A practical implementation of the operator analogue of FEAST is presented in Algorithm~\ref{alg:contFEAST}. As with matrix FEAST, there are two approaches for improving the accuracy of the Ritz values $\hat\lambda_1,\dots,\hat\lambda_m$ and vectors $\hat Q\hat x_1,\dots,\hat Q\hat x_m$. The first is to improve the accuracy of the quadrature rule in~\cref{eqn:approximate spectral projector}. The second is to iterate the algorithm by replacing $F$ by the quasimatrix $\hat U$ with columns $\hat u_i=\hat Q\hat x_i$ for $1\leq i\leq m$, repeating the process if necessary.\footnote{When $\mathcal{L}$ is non-normal the Ritz vectors $\hat Q\hat x_1,\dots,\hat Q\hat x_m$ may become numerically linearly dependent, which can lead to an ill-conditioned basis $\hat V$ in susbequent iterations. The robustness of Algorithm~\ref{alg:contFEAST} may be improved by computing the Schur vectors $v_1,\dots,v_m$ of $\hat L$ and using the orthonormal basis $\hat Q\hat v_1,\dots,\hat Q\hat v_m$ to seed the next iteration~\cite{stewart1976simultaneous}.} For normal operators, this iteration generates a sequence of quasimatrices $\hat Q_k$ with $\mathcal{H}$-orthonormal columns that converge to an $\mathcal{H}$-orthonormal basis for the invariant subspace $\mathcal{V}$ as $k\rightarrow\infty$. This can be viewed as a rational subspace iteration and geometric convergence of the Ritz pairs is typical (see~\cref{sec:convergence and stability}).

With either refinement strategy, the accuracy of the Ritz pairs may be monitored using the residual norm (see step~6 of Algorithm~\ref{alg:contFEAST}) as a proxy, just as in the matrix case. For normal operators, the error in the eigenvalues and eigenvectors computed by Algorithm~\ref{alg:contFEAST} is typically $\mathcal{O}(\epsilon)$, where $\epsilon$ is the threshold for the residual norm in step~6. We defer a discussion of the convergence and stability of Algorithm~\ref{alg:contFEAST} to~\cref{sec:convergence and stability}. Additional resources on residual norm bounds for eigenvalues and eigenvectors of matrices and extensions to closed linear operators are found in~\cite{bai2000templates,chatelin2011spectral,stewart1971error}.

In practice, when $\mathcal{L}$ is non-normal it may be beneficial to use a dual Rayleigh--Ritz projection $\hat Q_L^*\mathcal{L}\hat Q_R$, where the columns of $\hat Q_R$ approximate an orthonormal basis for the target eigenspace of $\mathcal{L}$ and the columns of $\hat Q_L$ approximate an orthonormal basis for the associated eigenspace of the adjoint $\mathcal{L}^*$. In the case of matrix FEAST, the use of the dual projection leads to a non-normal matrix eigensolver with improved robustness~\cite{kestyn2016feast}. Although it is not difficult to adapt Algorithm~\ref{alg:contFEAST} to an operator analogue of FEAST that uses dual projection, we focus on the implementation and analysis of the one-sided iteration.

Typically, solving the ODEs in \eqref{eqn:shifted linear ODEs} dominates the computational cost of Algorithm~\ref{alg:contFEAST}. With the ultraspherical spectral method, the computational complexity of solving the linear ODEs with $m$ distinct right hand sides is $\mathcal{O}(mMN\log(N))$ floating point operations (flops)~\cite{olver2013fast}. Here, $N$ and $M$ are, respectively, the degrees of the truncated Chebyshev series needed to resolve the columns of $G_k$ and the variable coefficients in $\mathcal{L}$ to within the tolerance $\epsilon$ specified in Algorithm~\ref{alg:contFEAST}. In addition to the ODE solve, the QR factorization in (ii) requires $\mathcal{O}(m^2N)$ flops~\cite{trefethen2009householder}, while the dense eigenvalue computation with a small $m\times m$ matrix in (iii) takes $\mathcal{O}(m^3)$ flops~\cite[p.~391]{golub2012matrix}. The complexity of one iteration of Algorithm~\ref{alg:contFEAST} is therefore $\mathcal{O}(mMN\log(N)+m^2N+m^3)$ flops. In practice, convergence to machine precision usually occurs within two or three iterations.

\subsection{Computing high-frequency eigenmodes}\label{subsec:high-frequency eigenmodes}
Algorithm~\ref{alg:contFEAST} adaptively and accurately resolves basis functions for highly oscillatory eigenmodes and preserves the sensitivity of the eigenvalues of the differential operator $\mathcal{L}$, so it is well-suited to computing high-frequency eigenmodes when $\mathcal{L}$ is self-adjoint or normal with respect to $(\cdot,\cdot)_\mathcal{H}$. We provide two examples from Sturm--Liouville theory to illustrate the effectiveness of the solve-then-discretize methodology in the high-frequency regime.

\subsubsection{A regular Sturm--Liouville eigenvalue problem}\label{subsec:regular SLEP}
First consider a regular Sturm--Liouville eigenvalue problem (SLEP) given by 
\begin{equation}
\label{eqn:regular SLEP}
-\frac{d^2u}{dx^2}+x^2u=\lambda\cosh(x) u, \qquad u(\pm 1)=0.
\end{equation}
This defines a self-adjoint differential operator with respect to the inner product
\begin{equation}
\label{eqn:IP for regular SLEP}
(v,u)_w=\int_{-1}^1 \overline{v}u\cosh(x)\, dx.
\end{equation} 
Consequently,~\cref{eqn:regular SLEP} possesses a complete $(\cdot,\cdot)_w$-orthonormal basis of eigenfunctions $u_1,u_2,u_3,\ldots$ for the weighted Hilbert space $\mathcal{H}_w=\{u : \lVert u\rVert_w=\sqrt{(u,u)_w}<\infty\}$ and an unbounded set of real eigenvalues $\lambda_1\leq \lambda_2\leq \lambda_3\leq \cdots$.

Asymptotics for the large eigenvalues of~\cref{eqn:regular SLEP} are given by~\cite{akbarfam2004higher}
\begin{equation}
\label{eqn:SLEP asymptotics}
\sqrt{\lambda_n}\sim \frac{n\pi}{\int_{-1}^1\sqrt{\cosh(x)}\, dx}, \qquad n\rightarrow\infty.
\end{equation}
To accurately compute the large eigenvalues of~\cref{eqn:regular SLEP} with Algorithm~\ref{alg:contFEAST}, we prescribe circular search regions with unit radius centered at the values given by the asymptotic formula in~\cref{eqn:SLEP asymptotics} (see~\cref{fig:regular SLEP}). Each search region contains one eigenvalue.
\begin{figure}[!tbp]
\label{fig:regular SLEP}
  \centering
  \begin{minipage}[b]{0.45\textwidth}
    \begin{overpic}[width=\textwidth]{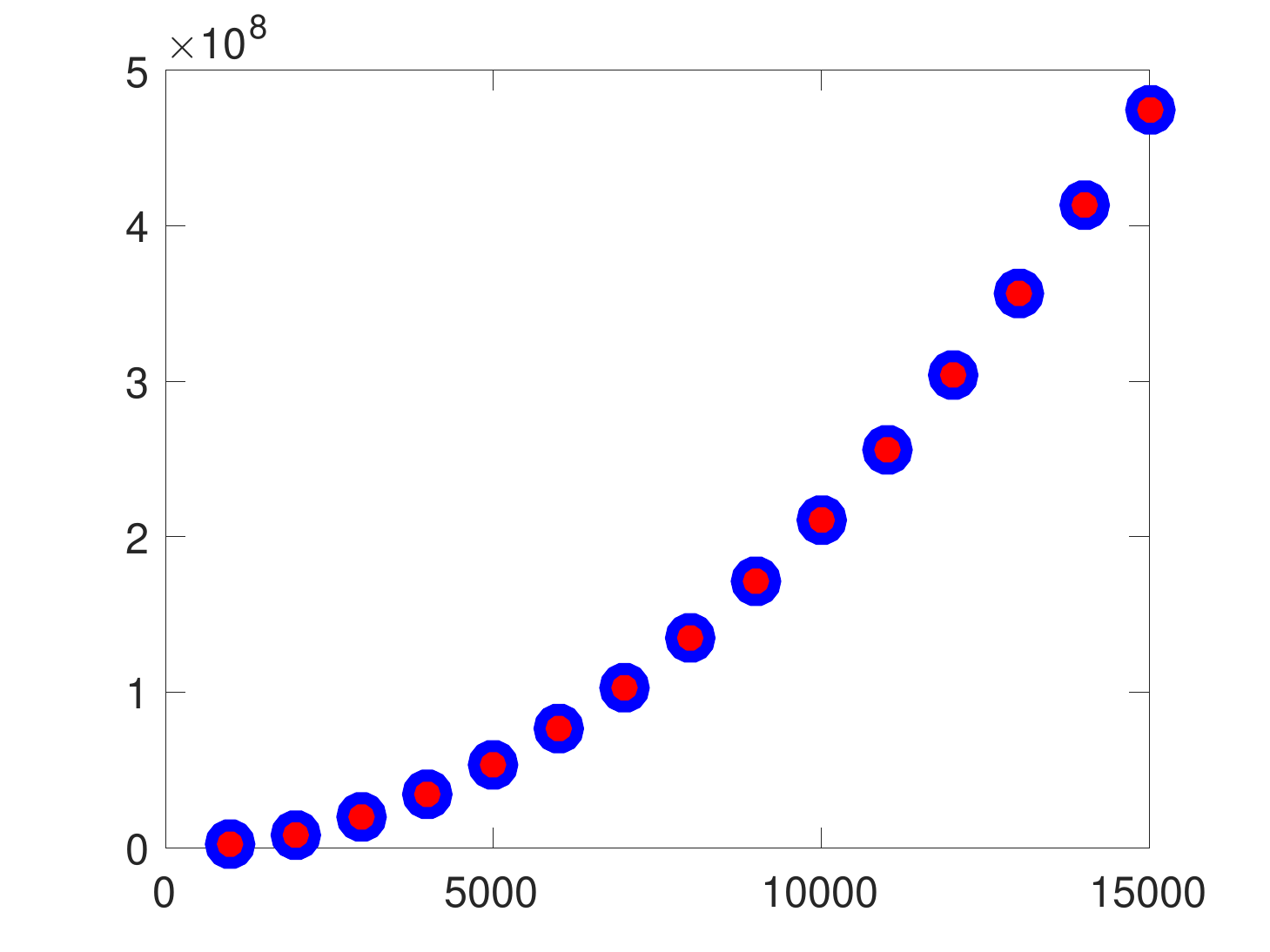}
 	\put (50,-2) {$\displaystyle n$}
 	\put(48,72) {$\hat\lambda_n$}
	\end{overpic}
  \end{minipage}
  \hfill
  \begin{minipage}[b]{0.45\textwidth}
    \begin{overpic}[width=\textwidth]{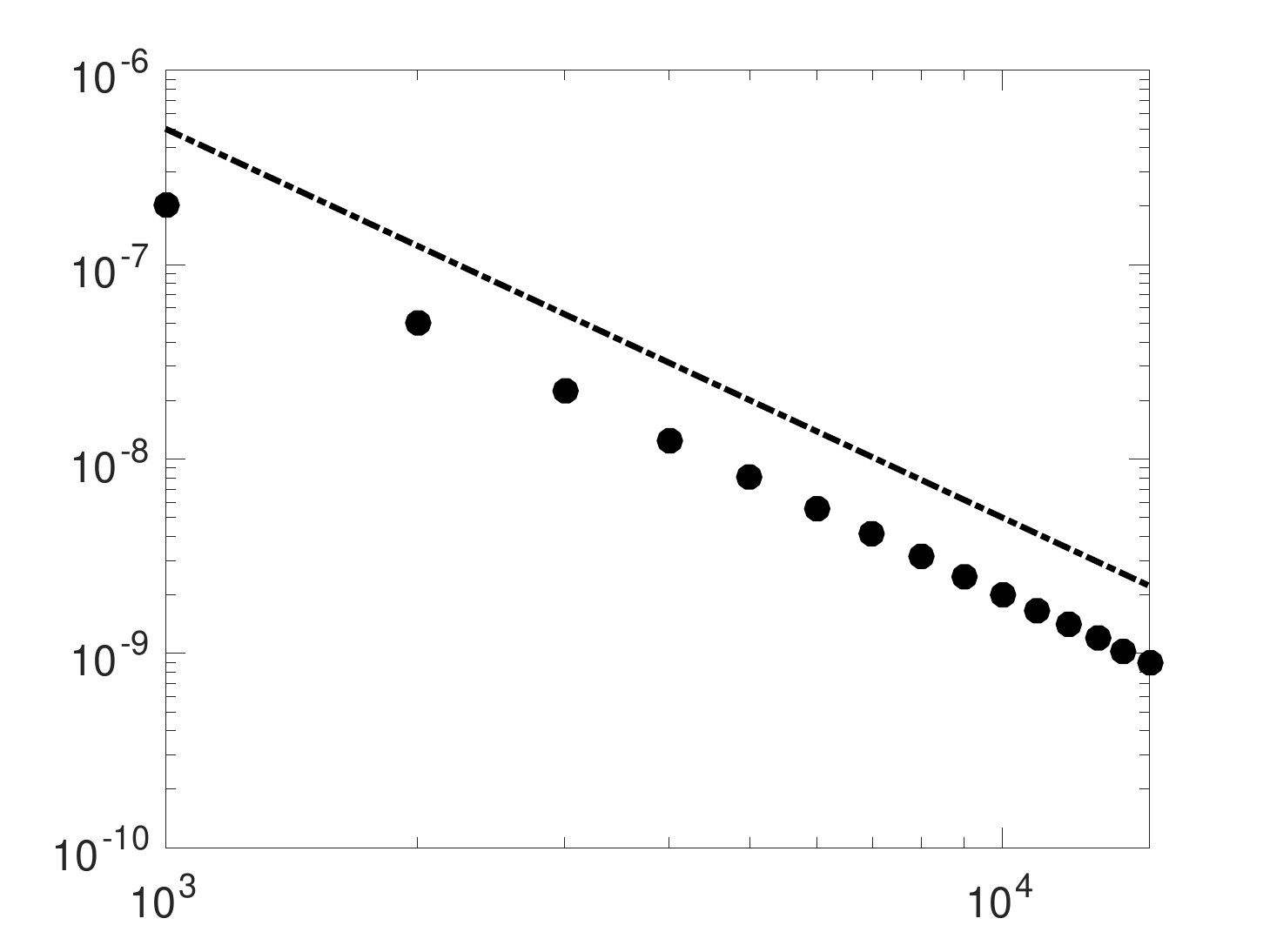}
 	\put (50,-2) {$\displaystyle n$}
 	\put(50,50) {\rotatebox{-22} {$\displaystyle\mathcal{O}(n^{-2})$}}
 	\put(35,72) {$\lvert\hat\lambda_n-\lambda^{asy}_n\rvert/\lambda^{asy}_n$}
 	\end{overpic}
  \end{minipage}
  \caption{Left: The large eigenvalues of~\cref{eqn:regular SLEP} are computed by \texttt{contFEAST} (see Algorithm~\ref{alg:contFEAST}) using search regions given by asymptotic estimates for the eigenvalues~\cref{eqn:SLEP asymptotics}. Right: The relative difference $\lvert\hat\lambda_n-\lambda^{asy}_n\rvert/\lambda^{asy}_n$ between the eigenvalues $\hat\lambda_n$ computed by \texttt{contFEAST} and the asymptotic values $\lambda^{asy}_n$ from~\cref{eqn:SLEP asymptotics}. The difference is compared to an $\mathcal{O}(n^{-2})$ relative error estimate~\cite{akbarfam2004higher}.}
\end{figure}

\subsubsection{An indefinite Sturm--Liouville eigenvalue problem}
\label{subsec:singular SLEP}
Next, we consider the following indefinite SLEP:
\begin{equation}
\label{eqn:singularly perturbed SLEP}
-\frac{d^2u}{dx^2}=\lambda x^3u, \qquad u(\pm 1)=0,
\end{equation}
which is closely related to models of light propagation in a nonhomogeneous material~\cite{akbarfam2004higher,wasow1986linear}. Since the weight function $x^3$ changes sign at $x=0$,~\cref{eqn:singularly perturbed SLEP} has a bi-infinite sequence of eigenvalues~\cite{atkinson1987asymptotics}. We index them in order as $\cdots\leq \lambda_{-2}\leq \lambda_{-1}< 0 <\lambda_1\leq \lambda_2\leq\cdots$.  The asymptotics for the positive eigenvalues are given by~\cite{akbarfam2002higher}
\begin{equation}
\label{eqn:singularly perturbed SLEP asymptotics}
\sqrt{\lambda_n}\sim \frac{(n-1/4)\pi}{\int_0^1x^{3/2}\, dx}, \qquad \lambda_n>0, \qquad n\rightarrow\infty.
\end{equation}
A similar expansion holds for the negative eigenvalues~\cite{akbarfam2002higher}.

In contrast to the previous example, the indefinite weight function $x^3$ means that~\cref{eqn:singularly perturbed SLEP} is not immediately associated with a self-adjoint operator on a Hilbert space. Instead,~\cref{eqn:singularly perturbed SLEP} is usually studied through the lens of a Krein space and the eigenfunctions form a Riesz basis for the Hilbert space with the inner product~\cite{curgus2013riesz}
\begin{equation}
(v,u)_{\lvert w\rvert}=\int_{-1}^1 \overline{v}u\lvert x\rvert^3\, dx.
\end{equation}

We use the leading order asymptotics in~\cref{eqn:singularly perturbed SLEP asymptotics} to identify search regions that are likely to contain an eigenvalue of~\cref{eqn:singularly perturbed SLEP}. Because the ultraspherical spectral method used to solve the ODEs in step 2 of Algorithm~\ref{alg:contFEAST} is efficient when applied to ODEs with smooth variable coefficients, it is convenient to treat~\cref{eqn:singularly perturbed SLEP} as a generalized eigenvalue problem, i.e., as $\mathcal{L}_1u=\lambda\mathcal{L}_2u$, where $\mathcal{L}_1u=-\frac{d^2u}{dx^2}$ and $\mathcal{L}_2u=x^3 u$. The eigenvalues of the pencil $z\mathcal{L}_2-\mathcal{L}_1$ are then computed with a straightforward generalization of Algorithm~\ref{alg:contFEAST} that is based on the spectral projector for the generalized eigenvalue problem, i.e.,
\begin{equation}
\label{eqn:generalized spectral projector}
\mathcal{P}_\mathcal{V}=\frac{1}{2\pi i}\int_{\partial\Omega}(z\mathcal{L}_2-\mathcal{L}_1)^{-1}\mathcal{L}_2\, dz.
\end{equation}
The eigenvalues and eigenfunctions are automatically resolved to essentially machine precision because of the use of the adaptive QR solver (see~\cref{fig:singular SLEP}).
\begin{figure}[!tbp]
\label{fig:singular SLEP}
  \centering
  \begin{minipage}[b]{0.45\textwidth}
    \begin{overpic}[width=\textwidth]{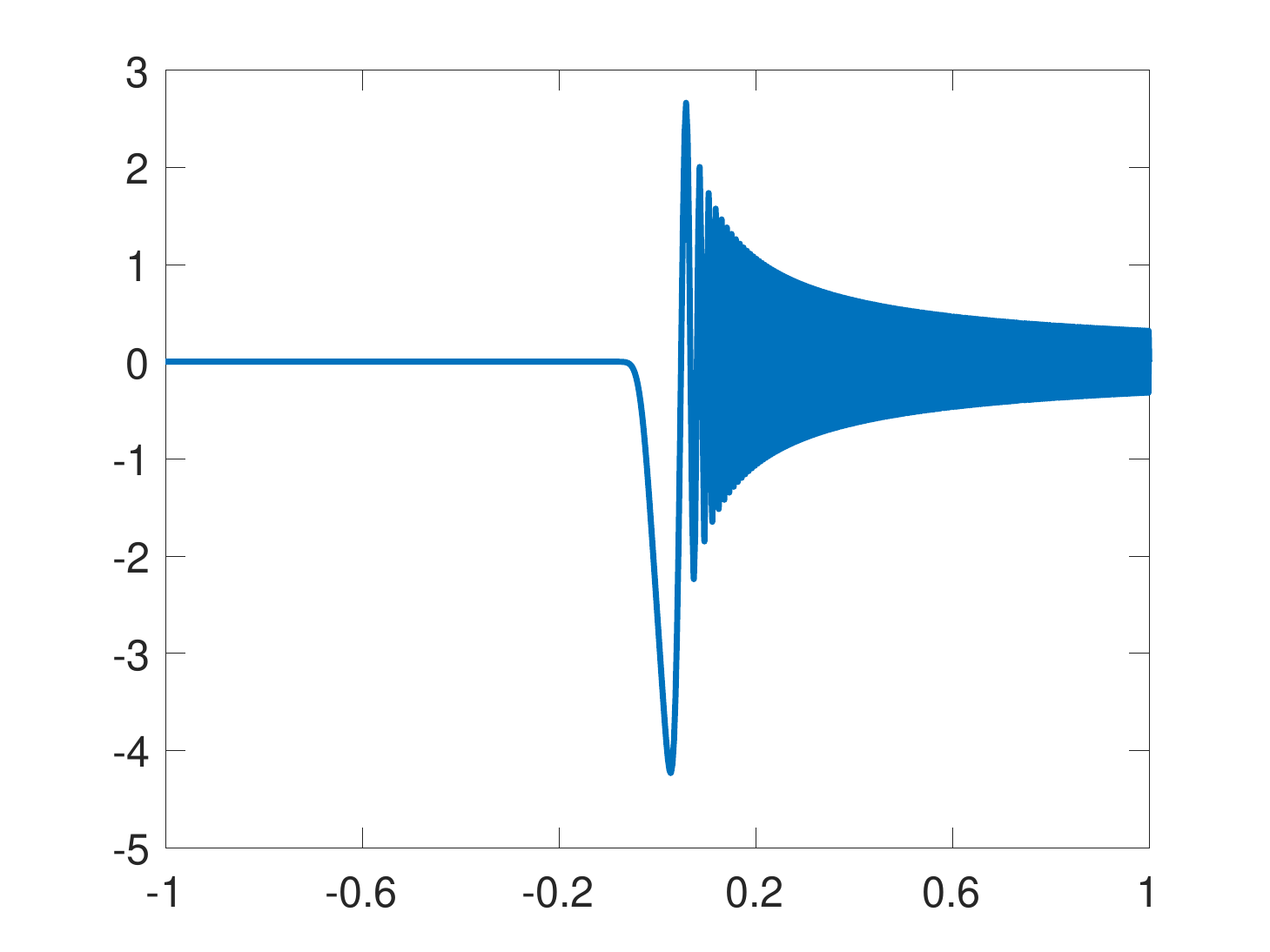}
 	\put (50,-2) {$\displaystyle x$}
 	\put(46,72) {$u_{1500}$}
	\end{overpic}
  \end{minipage}
  \hfill
  \begin{minipage}[b]{0.45\textwidth}
    \begin{overpic}[width=\textwidth]{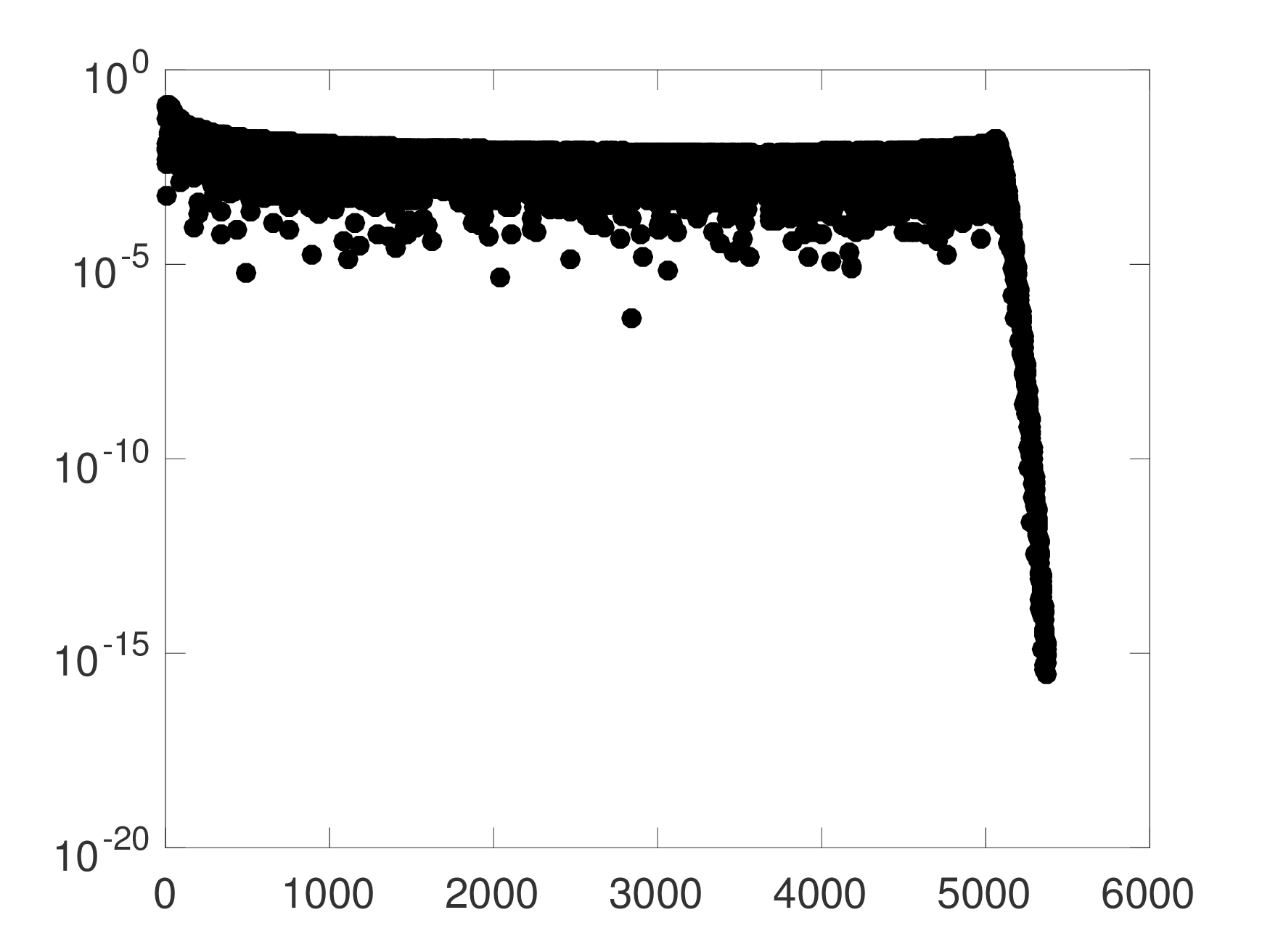}
 	\put (50,-2) {$\displaystyle k$}
 	\put(47.2,72) {$\lvert\hat u_k\rvert$}
	\end{overpic}
  \end{minipage}
  \caption{Left: The high-frequency eigenfunction associated to $\lambda_{1500}$ of the indefinite Sturm--Liouville eigenvalue problem~\cref{eqn:singularly perturbed SLEP} computed by \texttt{contFEAST} (see Algorithm~\ref{alg:contFEAST}). Right: The Chebyshev coefficients $\{\hat u_k\}$ in a series expansion used to represent the eigenfunction. About $5371$ Chebyshev coefficients are needed to accurately resolve the eigenfunction. The rapid decay in the coefficients to essentially machine precision is a good indication that the solution is fully resolved.}
\end{figure}

\section{Convergence and stability}\label{sec:convergence and stability}
The primary consequence of the approximations introduced in Algorithm~\ref{alg:contFEAST} is that the spectral projector is no longer applied exactly. Therefore, the basis $\hat Q$ computed for the Rayleigh--Ritz projection is not an exact basis for the invariant subspace $\mathcal{V}$ of $\mathcal{L}$ and may require further refinement. Here, we view the iterative refinement procedure used in Algorithm~\ref{alg:contFEAST} as a rational subspace iteration applied to a normal differential operator $\mathcal{L}$ in order to provide a preliminary analysis of the stability of the iteration and the sensitivity of the Ritz values. The main results may be summarized as follows.
\begin{itemize}[leftmargin=*,noitemsep]
\item[(i)] Algorithm~\ref{alg:contFEAST} yields a sequence of quasimatrices~$\hat Q_1,\dots,\hat Q_k$ that (generically) converge geometrically to an orthonormal basis for the eigenspace $\mathcal{V}$ (see~\cref{thm:rational subspace iteration for diff ops}).
\item[(ii)] If $\hat Q_k$ is a sufficiently good approximation to an orthonormal basis for $\mathcal{V}$, then the $\epsilon$-psuedospectrum of $\hat Q_k^* \mathcal{L}\hat Q_k$ is contained in the $2\epsilon$-psuedospectrum of $\mathcal{L}$ itself (see~\cref{thm:pseudospectra inclusion 2}).
\item[(iii)] Under mild conditions on the initial quasimatrix $F$ in Algorithm~\ref{alg:contFEAST}, the sequence $\lVert\mathcal{L}(\hat Q_k-Q)\rVert_{\mathbb{C}^m\rightarrow\mathcal{H}}$ is uniformly bounded as $k\rightarrow\infty$ (see~\cref{lem:L is UB on Q_k}).
\end{itemize}
Taken together, these results demonstrate that each iteration of Algorithm~\ref{alg:contFEAST} yields uniformly consistent Ritz pairs that converge linearly to the desired eigenpair and that the unboundedness of $\mathcal{L}$ does not lead to instability. Note that this analysis does not take into account the impact of finite-precision arithmetic or the fact that the shifted differential equations in~\cref{eqn:shifted linear ODEs} are not solved exactly at each iteration (see the discussion at the end of~\cref{subsec:rational subspace iteration for diff ops}). However, (ii) ensures that the eigenvalues of the small matrix $\hat Q_k^*\mathcal{L}\hat Q_k$ are not much more sensitive than the eigenvalues of $\mathcal{L}$. Therefore, provided that the eigenvalue problem for $\mathcal{L}$ is well-conditioned and we compute a sufficiently accurate approximation to a basis for $\mathcal{V}$, then we expect that the eigenvalues computed with Algorithm~\ref{alg:contFEAST} provide an accurate approximation to the desired eigenvalues of $\mathcal{L}$.

\subsection{Rational subspace iteration for differential operators}\label{subsec:rational subspace iteration for diff ops} 

In analogy to the matrix case~\cite{SubspaceIter}, Algorithm~\ref{alg:contFEAST} may be interpreted as a filtered subspace iteration. Filtered subspace iteration is a variant of standard subspace iteration for computing a target subset of eigenvalues of a matrix $A$~\cite[Ch.~5]{saad1992numerical}. The main idea is to choose a filter function $s(\cdot)$ that is large on the targeted eigenvalues of $A$ and small on the unwanted eigenvalues of $A$. Applying the spectral transformation $s(A)$,\footnote{A spectral transformation $s(\cdot)$ may be applied to $A$ via the eigendecomposition of $A$, or more generally the Jordan decomposition. For example, if $A$ has eigendecomposition $A=X\Lambda X^{-1}$ with $\Lambda={\rm diag}(\lambda_1,\dots,\lambda_n)$, then $s(A)=X\,s(\Lambda)X^{-1}$, where $s(\Lambda)={\rm diag}(s(\lambda_1),\dots,s(\lambda_n))$.} one uses standard subspace iteration to compute a basis for the eigenspace of $s(A)$ corresponding to its largest eigenvalues, i.e., the targeted eigenvalues of $A$. With an approximate basis for the eigenspace available, the eigenvalues and eigenvectors can be extracted with a Rayleigh--Ritz step.

From this perspective, Algorithm~\ref{alg:contFEAST} computes the eigenvalues of $\mathcal{L}$ in $\Omega$ with the aid of a rational filter function induced by the quadrature rule in~\cref{eqn:approximate spectral projector}, i.e., $s(\cdot)$ is given by
\begin{equation}\label{eqn:rational filter}
s(z)=\sum_{k=1}^\ell\frac{w_k}{z_k-z}, \qquad z\in\mathbb{C}\setminus\{z_1,\dots,z_l\}.
\end{equation}
The functional calculus for unbounded normal operators ensures that if $\lambda_i$ is an eigenvalue of $\mathcal{L}$ with eigenfunction $u_i$, then $s(\lambda_i)$ is an eigenvalue of $s(\mathcal{L})$ with eigenfunction $u_i$~\cite[VIII.5]{reed1980methods}.\footnote{The result~\cite[VIII.5]{reed1980methods} is stated for closed self-adjoint operators on $\mathcal{H}$, however, it extends immediately to closed normal operators on $\mathcal{H}$ if the spectral decomposition of a closed normal operator~\cite[Theorem 13.33]{rudin1991functional} is used in place of the spectral decomposition of a self-adjoint operator. For information on the spectral decomposition of unbounded normal operators and the associated functional calculus, see~\cite[Ch. 13]{rudin1991functional} and~\cite{dunford1958survey}.} As the degree of the quadrature rule is increased, the rational function becomes an increasingly good approximation to the Cauchy integral
\begin{equation}\label{eqn:Cauchy_int}
\chi(z)=\frac{1}{2\pi i}\int_{\partial\Omega}\frac{dw}{w-z}, \qquad z\in\mathbb{C}\setminus\partial\Omega.
\end{equation}
Therefore, the eigenvalues of $\mathcal{L}$ in $\Omega$ are usually $\mathcal{O}(1)$ in size under the spectral transformation $s(\cdot)$ while the eigenvalues outside of $\Omega$ are much smaller.

We now turn to the convergence of the iteration described in Algorithm~\ref{alg:contFEAST}, which we interpret as a subspace iteration applied to the bounded linear operator $\mathcal{\hat P}_\mathcal{V}=s(\mathcal{L})$. It is helpful to introduce the notions of the \textit{spectral radius} and a \textit{dominant eigenspace} of a bounded linear operator $\mathcal{B}$. The spectral radius of a bounded linear operator $\mathcal{B}$ on a Hilbert space $\mathcal{H}$ is defined as~\cite[p.~99]{davies2007linear}
\begin{equation}
\label{eqn:spectral radius}
\rho(\mathcal{B})=\max \{\lvert z\rvert : z\in\lambda(\mathcal{B})\}.
\end{equation}
The spectral radius is useful because it characterizes the asymptotic behavior of $\lVert\mathcal{B}^k\rVert_\mathcal{H}$, in the sense that $\rho(\mathcal{B})=\lim_{k\rightarrow\infty}\lVert\mathcal{B}^k\rVert_\mathcal{H}^{1/k}$~\cite[Theorem 4.1.3]{davies2007linear}. Let $\mathcal{V}$ be an invariant subspace of $\mathcal{B}$ associated with eigenvalues $\lambda_1\geq\dots\geq\lambda_m$ and a spectral projector $\mathcal{P}_\mathcal{V}$. We say that $\mathcal{B}$ has dominant eigenspace $\mathcal{V}$ if
\begin{equation}
\label{eqn:dominant eigenspace}
\rho(\mathcal{(I-P}_\mathcal{V})\mathcal{B})<|\lambda_m|.
\end{equation}

The following theorem is an extension of a convergence analysis~\cite[p.~119]{saad1992numerical} for matrix subspace iteration to the setting of bounded linear operators with a dominant eigenspace. We omit the details of the proof, as they are identical to those found in the proof of Lemma 3.1 and Lemma 3.2 of~\cite{gopalakrishnan2017filtered}.
\begin{theorem}
\label{thm:rational subspace iteration for diff ops}
Let $\mathcal{B}$ be a bounded linear operator on a Hilbert space $\mathcal{H}$ with dominant eigenspace $\mathcal{V}$, defined in~\cref{eqn:dominant eigenspace}, having ${\rm dim}(\mathcal{V})=m$. Select a quasimatrix $F:\mathbb{C}^m\rightarrow\mathcal{H}$ such that the columns of $\mathcal{P}_\mathcal{V}F$ are linearly independent and suppose the columns of the quasimatrix $\hat Q_k:\mathbb{C}^m\rightarrow\mathcal{H}$ form an orthonormal basis for ${\rm range}(\mathcal{B}^kF)$, for $k=1,2,3,\dots$. If $u\in\mathcal{V}$ is an eigenvector of $\mathcal{B}$ with eigenvalue $\lambda$, then there is a function $\hat u_k\in{\rm range}(\hat Q_k)$ such that \[
\lVert\hat u_k-u\rVert_\mathcal{H}\leq \left(\left|\rho/\lambda\right|+\epsilon_k\right)^k\lVert (I-\mathcal{P}_\mathcal{V})Fx\rVert_\mathcal{H},\qquad  k=1,2,3,\dots,
\] 
where $\rho=\rho((\mathcal{I-P}_\mathcal{V})\mathcal{B})$, $\epsilon_k\rightarrow 0$ as $k\rightarrow\infty$, and $u=\mathcal{P}_\mathcal{V}Fx$.
\end{theorem}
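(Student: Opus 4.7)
The plan is to follow the classical convergence analysis of subspace iteration for matrices (see~\cite[Ch.~5]{saad1992numerical}) but with matrix algebra replaced by the functional calculus for bounded linear operators. The key observation is that the columns of $\mathcal{B}^k F$ decompose as $\lambda^k$ times the target eigenvector plus a contamination term that is contracted geometrically by powers of $\mathcal{B}$ restricted to the complement of $\mathcal{V}$. Constructing $\hat u_k$ explicitly from $\mathcal{B}^k F$ then automatically places it in ${\rm range}(\mathcal{B}^k F) = {\rm range}(\hat Q_k)$, so no optimization over $\hat Q_k$ is required to exhibit the witness.

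Concretely, let $x \in \mathbb{C}^m$ be the coefficient vector for which $u = \mathcal{P}_\mathcal{V} F x$, and split $Fx = u + (\mathcal{I} - \mathcal{P}_\mathcal{V})Fx$. Because $\mathcal{P}_\mathcal{V}$ is defined by a contour integral of the resolvent of $\mathcal{B}$ around the spectral component in $\mathcal{V}$, it commutes with every power of $\mathcal{B}$ and leaves the subspace $\mathcal{V}$ invariant (see~\cite[p.~178]{kato2013perturbation}). Applying $\mathcal{B}^k$ and using both $\mathcal{B}u = \lambda u$ and the idempotence of $\mathcal{I} - \mathcal{P}_\mathcal{V}$ gives
\[
\mathcal{B}^k Fx = \lambda^k u + \bigl((\mathcal{I} - \mathcal{P}_\mathcal{V})\mathcal{B}\bigr)^k Fx.
\]
The dominance hypothesis $\rho < |\lambda_m| \le |\lambda|$ ensures $\lambda \neq 0$, so I set $\hat u_k = \lambda^{-k}\mathcal{B}^k F x$, which lies in ${\rm range}(\mathcal{B}^k F) = {\rm range}(\hat Q_k)$ by construction. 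Subtracting produces the clean identity $\hat u_k - u = \lambda^{-k}\bigl((\mathcal{I} - \mathcal{P}_\mathcal{V})\mathcal{B}\bigr)^k Fx$.

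To promote this identity to the geometric bound I would invoke Gelfand's spectral radius formula applied to the bounded operator $(\mathcal{I} - \mathcal{P}_\mathcal{V})\mathcal{B}$, which furnishes a sequence $\tilde\epsilon_k \to 0$ with $\lVert((\mathcal{I} - \mathcal{P}_\mathcal{V})\mathcal{B})^k\rVert_\mathcal{H}^{1/k} \le \rho + \tilde\epsilon_k$. Because $((\mathcal{I} - \mathcal{P}_\mathcal{V})\mathcal{B})^k$ annihilates any element of $\mathcal{V}$ (the subspace is $\mathcal{B}$-invariant and $\mathcal{I} - \mathcal{P}_\mathcal{V}$ kills $\mathcal{V}$), I can replace $Fx$ by $(\mathcal{I} - \mathcal{P}_\mathcal{V})Fx$ before taking norms. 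Combining these two observations yields
\[
\lVert \hat u_k - u \rVert_\mathcal{H} \le |\lambda|^{-k}(\rho + \tilde\epsilon_k)^k \lVert (\mathcal{I} - \mathcal{P}_\mathcal{V})Fx \rVert_\mathcal{H} = (|\rho/\lambda| + \epsilon_k)^k \lVert (\mathcal{I} - \mathcal{P}_\mathcal{V})Fx \rVert_\mathcal{H},
\]
with $\epsilon_k = \tilde\epsilon_k/|\lambda| \to 0$, which is precisely the advertised estimate.

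The main place where one must tread carefully compared with the matrix proof is the operator-theoretic bookkeeping: one needs the commutation $\mathcal{P}_\mathcal{V}\mathcal{B} = \mathcal{B}\mathcal{P}_\mathcal{V}$ and the invariance $\mathcal{B}\mathcal{V} \subseteq \mathcal{V}$ even without any self-adjointness of $\mathcal{B}$, but since $\mathcal{B}$ is bounded and the projector comes from a Riesz contour integral, both properties are standard. A secondary subtlety is that the linear-independence hypothesis on the columns of $\mathcal{P}_\mathcal{V}F$ is not actually needed to produce a single $\hat u_k$ converging to a fixed $u$, but it is the natural assumption that guarantees every $u \in \mathcal{V}$ arises from some $x$, so that the pointwise convergence assembled above produces a basis for all of $\mathcal{V}$ in the limit.
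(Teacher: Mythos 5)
Your argument is correct and is essentially the standard subspace-iteration convergence proof that the paper itself defers to (Lemmas 3.1--3.2 of the cited Gopalakrishnan et al.\ reference): split $Fx$ via the commuting Riesz projector, take $\hat u_k=\lambda^{-k}\mathcal{B}^kFx$, and apply Gelfand's formula to $(\mathcal{I}-\mathcal{P}_\mathcal{V})\mathcal{B}$. All the operator-theoretic steps you flag (commutation, invariance, $\lambda\neq 0$ from the dominance hypothesis) are handled correctly, so no gaps remain.
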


Although we have neglected the effects of approximately solving the ODEs in~\cref{eqn:shifted linear ODEs} and the impact of round-off errors in our brief analysis of rational subspace iteration for normal differential operators, we mention two recent results for rational subspace iteration with matrices~\cite{saad2016analysis} and self-adjoint differential operators~\cite{gopalakrishnan2017filtered,gopalakrishnan2017spectral}.
\begin{itemize}[leftmargin=*,noitemsep]
\item[•] For matrices, small errors made during application of the spectral projector generally do not alter the convergence behavior of subspace iteration~\cite{saad2016analysis}. In this case, the sequence $\hat Q_k$ no longer converges to an exact basis for $\mathcal{V}$. However, the matrices $\hat Q_k$ approximate a basis for $\mathcal{V}$ and the approximation error converges geometrically to a constant determined by the sizes of the errors introduced at each iteration~\cite{saad2016analysis}. 
\item[•] For self-adjoint differential operators (closed and densely defined on $\mathcal{H}$), rational subspace iteration converges to a subspace even when the resolvent operator is discretized to solve the ODEs in~\cref{eqn:shifted linear ODEs}~\cite{gopalakrishnan2017filtered,gopalakrishnan2017spectral}. The distance between the computed subspace and the target eigenspace (in a distance metric between subspaces) is proportional to the approximation error in the discretized resolvent~\cite{gopalakrishnan2017filtered,gopalakrishnan2017spectral}.
\end{itemize}
We expect that similar statements hold for normal operators on $\mathcal{H}$, but a rigorous and detailed convergence analysis is more subtle and beyond the scope of this paper.

\subsection{A pseudospectral inclusion theorem}\label{subsec:pseudospectral inclusion}
As ${\rm range}(\hat Q)$ is not an invariant subspace of $\mathcal{L}$, the $\epsilon$-pseudospectrum of $\hat Q^*\mathcal{L}\hat Q$ is not, in general, contained in the $\epsilon$-pseudospectrum of $\mathcal{L}$. 
However, if $\lVert\hat Q-Q\rVert_{\mathbb{C}^m\rightarrow\mathcal{H}}$ is sufficiently small, then the $\epsilon$-pseudospectrum of $\hat Q^*\mathcal{L}\hat Q$ is contained in the $2\epsilon$-pseudospectrum of $\mathcal{L}$. 

\begin{theorem}\label{thm:pseudospectra inclusion 2}
Consider a closed operator $\mathcal{L}$ with domain $\mathcal{D}(\mathcal{L})$ that is densely defined on a Hilbert space $\mathcal{H}$ and fix $\epsilon>0$. Let $Q:\mathbb{C}^m\rightarrow\mathcal{D}(\mathcal{L})\bigcap\mathcal{D}(\mathcal{L}^*)$ satisfy $Q^*Q=I$, and let ${\rm range}(Q)$ be an $m$-dimensional invariant subspace of $\mathcal{L}$. If $\hat Q:\mathbb{C}^m\rightarrow\mathcal{D}(\mathcal{L})$ satisfies
\[
\lVert\hat Q-Q\rVert_{\mathbb{C}^m\rightarrow\mathcal{H}}\left(\lVert\mathcal{L}^*Q\rVert_{\mathbb{C}^m\rightarrow\mathcal{H}}+\lVert\mathcal{L}Q\rVert_{\mathbb{C}^m\rightarrow\mathcal{H}}+\lVert\mathcal{L}(\hat Q-Q)\rVert_{\mathbb{C}^m\rightarrow\mathcal{H}}\right)<\frac{\epsilon}{2},
\]
then $\lambda_\epsilon(\hat Q^*\mathcal{L}\hat Q)\subset\lambda_{2\epsilon}(\mathcal{L})$.
\end{theorem}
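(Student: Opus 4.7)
The plan is to realize $\hat Q^*\mathcal{L}\hat Q$ as a small $m\times m$ perturbation of $Q^*\mathcal{L}Q$ and then combine a routine matrix pseudospectral perturbation estimate with \cref{thm:pseudospectra inclusion 1}. Since that previous theorem already furnishes $\lambda_{3\epsilon/2}(Q^*\mathcal{L}Q)\subset\lambda_{3\epsilon/2}(\mathcal{L})\subset\lambda_{2\epsilon}(\mathcal{L})$, all I really need to do is bound $\|\hat Q^*\mathcal{L}\hat Q - Q^*\mathcal{L}Q\|_{\mathbb{C}^m}$ by $\epsilon/2$ in terms of the three norms appearing in the hypothesis.

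First I would expand, by adding and subtracting $\hat Q^*\mathcal{L}Q$,
\[
\hat Q^*\mathcal{L}\hat Q - Q^*\mathcal{L}Q = (\hat Q - Q)^*\mathcal{L}(\hat Q - Q) + (\hat Q - Q)^*\mathcal{L}Q + Q^*\mathcal{L}(\hat Q - Q),
\]
which is a well-defined $m\times m$ matrix because ${\rm range}(Q)$ and ${\rm range}(\hat Q)$ are both contained in $\mathcal{D}(\mathcal{L})$. Submultiplicativity immediately gives the first and third terms the bounds $\|\hat Q - Q\|_{\mathbb{C}^m\to\mathcal{H}}\|\mathcal{L}(\hat Q - Q)\|_{\mathbb{C}^m\to\mathcal{H}}$ and $\|\hat Q - Q\|_{\mathbb{C}^m\to\mathcal{H}}\|\mathcal{L}Q\|_{\mathbb{C}^m\to\mathcal{H}}$. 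The middle term $Q^*\mathcal{L}(\hat Q - Q)$ is subtle: because $\mathcal{L}$ is unbounded, $Q^*\mathcal{L}$ is not a bounded operator on $\mathcal{H}$ and naive submultiplicativity is unavailable. Here the hypothesis ${\rm range}(Q)\subset\mathcal{D}(\mathcal{L}^*)$ enters decisively: for any $w\in\mathbb{C}^m$ and any standard basis vector $e_k$,
\[
(e_k, Q^*\mathcal{L}(\hat Q - Q)w)_{\mathbb{C}^m} = (Qe_k, \mathcal{L}(\hat Q - Q)w)_\mathcal{H} = (\mathcal{L}^*Qe_k, (\hat Q - Q)w)_\mathcal{H},
\]
so $Q^*\mathcal{L}(\hat Q - Q) = (\mathcal{L}^*Q)^*(\hat Q - Q)$ as a matrix and its norm is bounded by $\|\mathcal{L}^*Q\|_{\mathbb{C}^m\to\mathcal{H}}\|\hat Q - Q\|_{\mathbb{C}^m\to\mathcal{H}}$. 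Summing the three estimates and invoking the hypothesis yields $\|\hat Q^*\mathcal{L}\hat Q - Q^*\mathcal{L}Q\|_{\mathbb{C}^m} < \epsilon/2$.

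Finally, I would apply the elementary matrix perturbation bound for pseudospectra: if $\|E\|_{\mathbb{C}^m}<\delta$, then $\lambda_\epsilon(M+E)\subset\lambda_{\epsilon+\delta}(M)$, which follows from the definition by writing $\|(zI-M)u\|\leq\|(zI-(M+E))u\|+\|Eu\|$ for a near-null vector $u$. Taking $M=Q^*\mathcal{L}Q$ and $\delta=\epsilon/2$ gives $\lambda_\epsilon(\hat Q^*\mathcal{L}\hat Q)\subset\lambda_{3\epsilon/2}(Q^*\mathcal{L}Q)$, and chaining with \cref{thm:pseudospectra inclusion 1} at level $3\epsilon/2$ (valid because ${\rm range}(Q)$ is $\mathcal{L}$-invariant) produces the inclusion $\lambda_\epsilon(\hat Q^*\mathcal{L}\hat Q)\subset\lambda_{3\epsilon/2}(\mathcal{L})\subset\lambda_{2\epsilon}(\mathcal{L})$, which is the desired result.

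The main obstacle is the middle cross-term. Because $\mathcal{L}$ is unbounded there is no direct submultiplicativity estimate for $Q^*\mathcal{L}(\hat Q - Q)$; the three norms in the hypothesis have been chosen precisely so that $\|\mathcal{L}^*Q\|$, $\|\mathcal{L}Q\|$, and $\|\mathcal{L}(\hat Q - Q)\|$ together absorb every occurrence where the unboundedness of $\mathcal{L}$ would otherwise break a clean finite-dimensional bound. The adjoint manipulation above is the single ingredient that converts the problematic composition $Q^*\mathcal{L}$ into the bounded object $(\mathcal{L}^*Q)^*$, and once it is in place the rest of the argument is bookkeeping.
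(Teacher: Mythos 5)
Your proof is correct, and the first half coincides with the paper's: both arguments write $\hat Q^*\mathcal{L}\hat Q = Q^*\mathcal{L}Q + B$ with $B = Q^*\mathcal{L}E + E^*\mathcal{L}Q + E^*\mathcal{L}E$, $E=\hat Q - Q$, and both handle the problematic unbounded composition $Q^*\mathcal{L}E$ by passing to the adjoint, $Q^*\mathcal{L}E = (\mathcal{L}^*Q)^*E$, so that the hypothesis delivers $\lVert B\rVert_{\mathbb{C}^m} < \epsilon/2$. Where you diverge is in converting that bound into a pseudospectral inclusion. The paper works at the resolvent level: it writes $R_{\hat Q}(z) = [R_Q(z)^{-1}-B]^{-1}$, expands via a Neumann series under the assumption $z\notin\lambda_\epsilon(\mathcal{L})$ (so that $\lVert BR_Q(z)\rVert_{\mathbb{C}^m}\le 1/2$), and deduces the multiplicative comparison $\lVert R_{\hat Q}(z)\rVert_{\mathbb{C}^m}\le 2\lVert R_Q(z)\rVert_{\mathbb{C}^m}$ before invoking \cref{thm:pseudospectra inclusion 1}. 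You instead use the elementary additive perturbation property of matrix pseudospectra, $\lambda_\epsilon(M+B)\subset\lambda_{\epsilon+\lVert B\rVert}(M)$, to get $\lambda_\epsilon(\hat Q^*\mathcal{L}\hat Q)\subset\lambda_{3\epsilon/2}(Q^*\mathcal{L}Q)$, and then apply \cref{thm:pseudospectra inclusion 1} at level $3\epsilon/2$ (legitimate, since its proof establishes the resolvent inequality for every $z$ and hence the inclusion at every level). Your route avoids the case split and the Neumann series entirely and in fact proves the slightly sharper inclusion $\lambda_\epsilon(\hat Q^*\mathcal{L}\hat Q)\subset\lambda_{3\epsilon/2}(\mathcal{L})$; the paper's resolvent-based argument is the one that generalizes more readily if one wants quantitative control of $\lVert R_{\hat Q}(z)\rVert_{\mathbb{C}^m}$ itself rather than just set inclusions.
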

\begin{proof}

Consider $z\in\lambda_\epsilon(\hat Q^*\mathcal{L}\hat Q)$. If $z\in\lambda_\epsilon(\hat Q^*\mathcal{L}\hat Q)\cap\lambda_\epsilon(\mathcal{L})$, there is nothing to prove, so assume without loss of generality that $z\not\in\lambda_\epsilon(\mathcal{L})$. If we denote $R_Q(z)=(zI-Q^*\mathcal{L}Q)^{-1}$, $R_{\hat Q}(z)=(zI-\hat Q^*\mathcal{L}\hat Q)^{-1}$, and $E=\hat Q-Q$, then we have that $R_{\hat Q}(z)=[R_Q(z)^{-1}-B]^{-1}$, where $B=Q^*\mathcal{L}E+E^*\mathcal{L}Q+E^*\mathcal{L}E$. Employing a formula for the inverse of the sum of two matrices, we obtain $R_{\hat Q}(z)=R_Q(z)+R_Q(z)[I-BR_Q(z)]^{-1}BR_Q(z)$~\cite{henderson1981deriving}.

Now, $\lVert B\rVert_{\mathbb{C}^m}\leq\lVert Q^*\mathcal{L}E\rVert_{\mathbb{C}^m}+\lVert E^*\mathcal{L}Q\rVert_{\mathbb{C}^m}+\lVert E^*\mathcal{L}E\rVert_{\mathbb{C}^m}$. Since $\lVert E^*\rVert_{\mathcal{H}\rightarrow\mathbb{C}^m}=\lVert E\rVert_{\mathbb{C}^m\rightarrow\mathcal{H}}$ and $\lVert Q^*\mathcal{L}\rVert_{\mathcal{H}\rightarrow\mathbb{C}^m}=\lVert\mathcal{L}^*Q\rVert_{\mathbb{C}^m\rightarrow\mathcal{H}}$~\cite[p.~256]{kato2013perturbation}, our hypothesis indicates that the sum of the three terms comprising $B$ is bounded in norm by
\[
\lVert B\rVert_{\mathbb{C}^m}\leq\lVert E\rVert_{\mathbb{C}^m\rightarrow\mathcal{H}}\left(\lVert\mathcal{L}^*Q\rVert_{\mathbb{C}^m\rightarrow\mathcal{H}}+\lVert\mathcal{L}Q\rVert_{\mathbb{C}^m\rightarrow\mathcal{H}}+\lVert\mathcal{L}E\rVert_{\mathbb{C}^m\rightarrow\mathcal{H}}\right)< \frac{\epsilon}{2}.
\]
Moreover, since $z\not\in\lambda_\epsilon(\mathcal{L})$, we have that $\lVert R_Q(z)\rVert_{\mathbb{C}^m}\leq 1/\epsilon$ by~\cref{thm:pseudospectra inclusion 1}. Therefore, $\lVert BR_Q(z)\rVert_{\mathbb{C}^m}\leq 1/2$.

Because $\lVert BR_Q(z)\rVert_{\mathbb{C}^m}\leq 1/2$, we may use the Neumann series to compute $(I-BR_Q(z))^{-1}=\sum_{k=0}^\infty (BR_Q(z))^k$. We see that $R_{\hat Q}(z)=R_Q(z)\left(I+\sum_{k=1}^\infty(BR_Q(z))^k\right)$ and therefore, 
\[
\lVert R_{\hat Q}(z)\rVert_{\mathbb{C}^m}\leq \left(1+\sum_{k=1}^\infty \frac{1}{2^k}\right)\!\lVert R_Q(z)\rVert_{\mathbb{C}^m}= 2\lVert R_Q(z)\rVert_{\mathbb{C}^m}.
\]
Now, if $z\in\lambda_\epsilon(\hat Q^*\mathcal{L}\hat Q)$, then  $\lVert R_Q(z)\rVert_{\mathbb{C}^m}\geq\lVert R_{\hat Q}(z)\rVert_{\mathbb{C}^m}/2>1/(2\epsilon)$. By~\cref{thm:pseudospectra inclusion 1}, we have that $\lVert(zI-\mathcal{L})^{-1}\rVert_\mathcal{H}\geq\lVert R_Q(z)\rVert_{\mathbb{C}^m}$. Collecting inequalities yields the result $\lVert(zI-\mathcal{L})^{-1}\rVert_\mathcal{H}>1/(2\epsilon)$, i.e., $z\in\lambda_{2\epsilon}(\mathcal{L})$.
\end{proof}

A consequence of~\cref{thm:pseudospectra inclusion 2} is that Algorithm~\ref{alg:contFEAST} possesses a type of stability provided that $\mathcal{L}$ is uniformly bounded on the sequence $E_1,E_2,E_3,\dots$, where $E_k=\hat Q_k-Q$ for $k\geq 1$. If $\mathcal{L}$ is uniformly bounded on $\{E_k\}_{k=1}^\infty$, then there is a $\Lambda\geq 0$ such that ${\rm sup}_{k\geq 1}\lVert\mathcal{L}E_k\rVert_{\mathbb{C}^m\rightarrow\mathcal{H}}\leq\Lambda$. Applying~\cref{thm:pseudospectra inclusion 2}, we see that Algorithm~\ref{alg:contFEAST} computes elements in the $2\epsilon$-pseudospectrum of $\mathcal{L}$ provided that a basis for $\mathcal{V}$ is resolved to within $\epsilon/(2(\lVert\mathcal{L}^*Q\rVert_{\mathbb{C}^m\rightarrow\mathcal{H}}+\lVert\mathcal{L}Q\rVert_{\mathbb{C}^m\rightarrow\mathcal{H}}+\Lambda))$. 

We now verify, with two mild constraints placed on the choice of the initial quasimatrix $F$, that $\mathcal{L}$ is uniformly bounded on the sequence $\{\hat Q_k\}_{k=1}^\infty$ generated by Algorithm~\ref{alg:contFEAST}. Note that this implies that $\mathcal{L}$ is uniformly bounded on $\{E_k\}_{k=1}^\infty$ because $E_k=\hat Q_k-Q$ and ${\rm range}(Q)\subset\mathcal{D}(\mathcal{L})$. The constraints on $F$ are generically satisfied when $F$ is selected as in~\cref{sec:practical algorithm}. In the statement of the bound on $\lVert\mathcal{L}\hat Q_k\rVert_{\mathbb{C}^m\rightarrow\mathcal{H}}$, we use the notation $\sigma_{\min}(\mathcal{P}_\mathcal{V}F)$ and $\sigma_{\min}((\mathcal{I-P}_\mathcal{V})F)$ to denote the smallest singular values of the quasimatrices $\mathcal{P}_\mathcal{V}F$ and $(\mathcal{I-P}_\mathcal{V})F$, respectively.\footnote{The singular value decomposition of a quasimatrix $A:\mathbb{C}^m\rightarrow\mathcal{H}$ is the decomposition $A=U\Sigma V^*$, where $U:\mathbb{C}^m\rightarrow\mathcal{H}$ is a quasimatrix with $\mathcal{H}$-orthonormal columns, $\Sigma\in\mathbb{C}^{m\times m}$ is a diagonal matrix with non-negative entries $\sigma_1\geq\dots\geq\sigma_m$, and $V\in\mathbb{C}^{m\times m}$ is a unitary matrix~\cite{contFact2015}.}

\begin{lemma}
\label{lem:L is UB on Q_k}
Consider a closed, normal operator $\mathcal{L}$ with domain $\mathcal{D}(\mathcal{L})$ that is densely defined on a Hilbert space $\mathcal{H}$. Let $\mathcal{\hat P}_\mathcal{V}$ be the bounded operator on $\mathcal{H}$ defined in~\cref{eqn:approximate spectral projector} and suppose that $\mathcal{\hat P}_\mathcal{V}$ has a dominant eigenspace of $\mathcal{V}$ (see~\cref{eqn:dominant eigenspace}) with ${\rm dim}(\mathcal{V})=m$.
Let $F$, $\mathcal{P}_\mathcal{V}$, and $\{\hat Q_k\}_{k=1}^\infty$ be as in~\cref{thm:rational subspace iteration for diff ops} with $\mathcal{B}=\mathcal{\hat P}_\mathcal{V}$. Suppose that $\mathcal{\hat P}_\mathcal{V}^kF$ (for each $k\geq 1$) and $(\mathcal{I-P}_\mathcal{V})F$ each have linearly independent columns and that ${\rm range}(F)\subset\mathcal{D}(\mathcal{L})$.  Then, we have that 
\[
\lVert\mathcal{L}\hat Q_k\rVert_{\mathbb{C}^m\rightarrow\mathcal{H}}\leq 2M\lVert\mathcal{L}F\rVert_{\mathbb{C}^m\rightarrow\mathcal{H}},\qquad k=1,2,3\dots,
\]
where $M=\max\left\lbrace1/\sigma_{\min}(\mathcal{P}_\mathcal{V}F),1/\sigma_{\min}((\mathcal{I-P}_\mathcal{V})F)\right\rbrace$.
\end{lemma}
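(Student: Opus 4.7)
The plan is to exploit normality of $\mathcal{L}$ to decompose everything according to the orthogonal splitting $\mathcal{H}=\mathcal{V}\oplus\mathcal{V}^\perp$. Since $\mathcal{L}$ is closed and normal, its functional calculus makes $\mathcal{\hat P}_\mathcal{V}=s(\mathcal{L})$ a bounded normal operator commuting with $\mathcal{L}$, and both $\mathcal{V}$ and $\mathcal{V}^\perp$ are invariant under each. Let $\mathcal{\hat P}_V$ and $\mathcal{\hat P}_W$ denote the restrictions of $\mathcal{\hat P}_\mathcal{V}$ to $\mathcal{V}$ and $\mathcal{V}^\perp$, and set $F_V=\mathcal{P}_\mathcal{V}F$, $F_W=(\mathcal{I-P}_\mathcal{V})F$. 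The dominance hypothesis \cref{eqn:dominant eigenspace} forces the eigenvalues of $\mathcal{\hat P}_V$ to satisfy $|\lambda_m|>\rho(\mathcal{\hat P}_W)$, so $\mathcal{\hat P}_V^k$ is invertible on $\mathcal{V}$ with $\lVert(\mathcal{\hat P}_V^k)^{-1}\rVert_{\mathbb{C}^m}\leq 1/|\lambda_m|^k$; combined with $\sigma_{\min}(F_V)>0$, this makes $\mathcal{\hat P}_V^k F_V:\mathbb{C}^m\to\mathcal{V}$ a bijection.

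Because the operator norm $\lVert\mathcal{L}\hat Q_k\rVert_{\mathbb{C}^m\to\mathcal{H}}$ is invariant under a unitary change of basis, it equals the supremum of $\lVert\mathcal{L}u\rVert_\mathcal{H}$ over unit vectors $u=\mathcal{\hat P}_\mathcal{V}^k F y\in\text{range}(\mathcal{\hat P}_\mathcal{V}^k F)$. Such a $u$ splits orthogonally as $u=v+w_k$, where $v=\mathcal{\hat P}_V^k F_V y\in\mathcal{V}$ and $w_k=\mathcal{\hat P}_W^k F_W y\in\mathcal{V}^\perp$; bijectivity then determines $y=F_V^{-1}(\mathcal{\hat P}_V^k)^{-1}v$ from $v$, and Pythagoras gives $\lVert v\rVert_\mathcal{H}\leq\lVert u\rVert_\mathcal{H}=1$.

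Now apply $\mathcal{L}$ and commute it past $\mathcal{\hat P}_W^k$ to obtain the orthogonal decomposition $\mathcal{L}u=\mathcal{L}v+\mathcal{\hat P}_W^k\mathcal{L}F_W y$. For the $\mathcal{V}$-piece, factor $\mathcal{L}|_\mathcal{V}=(\mathcal{L}F_V)F_V^{-1}$ and use $\lVert\mathcal{L}F_V\rVert\leq\lVert\mathcal{L}F\rVert$ (from $\mathcal{L}F=\mathcal{L}F_V+\mathcal{L}F_W$ with orthogonal summands) to get $\lVert\mathcal{L}v\rVert_\mathcal{H}\leq M\lVert\mathcal{L}F\rVert$. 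For the $\mathcal{V}^\perp$-piece, normality of $\mathcal{\hat P}_W$ yields $\lVert\mathcal{\hat P}_W^k\rVert=\rho(\mathcal{\hat P}_W)^k$; together with $\lVert\mathcal{L}F_W\rVert\leq\lVert\mathcal{L}F\rVert$, the estimate $\lVert y\rVert_{\mathbb{C}^m}\leq\lVert v\rVert_\mathcal{H}/(\sigma_{\min}(F_V)|\lambda_m|^k)$, and the dominance bound $(\rho(\mathcal{\hat P}_W)/|\lambda_m|)^k\leq 1$, this gives $\lVert\mathcal{\hat P}_W^k\mathcal{L}F_W y\rVert_\mathcal{H}\leq M\lVert\mathcal{L}F\rVert$. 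A final application of Pythagoras yields $\lVert\mathcal{L}u\rVert_\mathcal{H}\leq\sqrt{2}\,M\lVert\mathcal{L}F\rVert\leq 2M\lVert\mathcal{L}F\rVert$.

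The principal obstacle is the unboundedness of $\mathcal{L}$: one cannot estimate $\lVert\mathcal{L}\mathcal{\hat P}_W^k F_W y\rVert$ by $\lVert\mathcal{L}\rVert\cdot\lVert\mathcal{\hat P}_W^k F_W y\rVert$. The essential move is to commute $\mathcal{L}$ past $\mathcal{\hat P}_W^k$ so that it acts on $F_W y\in\mathcal{D}(\mathcal{L})$, where it is controlled by $\lVert\mathcal{L}F\rVert$. Rigorous justification requires checking that $\mathcal{\hat P}_W^k F_W y\in\mathcal{D}(\mathcal{L})$ so that the commutation is well-defined; this holds because each resolvent comprising $\mathcal{\hat P}_\mathcal{V}$ maps $\mathcal{H}$ into $\mathcal{D}(\mathcal{L})$, and the functional calculus for closed normal operators preserves this commutation relation.
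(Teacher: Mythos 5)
Your proof is correct, and it shares the paper's skeleton---split $F$ into $\mathcal{P}_\mathcal{V}F$ and $(\mathcal{I-P}_\mathcal{V})F$, commute $\mathcal{L}$ past the filter so the unbounded operator only ever acts on ${\rm range}(F)\subset\mathcal{D}(\mathcal{L})$, and bound the surviving finite-dimensional coefficients---but your estimate for the $\mathcal{V}^\perp$-component is genuinely different. On the $\mathcal{V}$-component the two arguments coincide: your step ``$\lVert v\rVert_\mathcal{H}\le 1$ and $v=\mathcal{P}_\mathcal{V}Fx$ force $\lVert x\rVert_{\mathbb{C}^m}\le 1/\sigma_{\min}(\mathcal{P}_\mathcal{V}F)$'' is exactly the paper's pseudoinverse identity $D_1^kR_k^{-1}=(\mathcal{P}_\mathcal{V}F)^+\mathcal{P}_\mathcal{V}\hat Q_k$ combined with $\lVert\mathcal{P}_\mathcal{V}\hat Q_k\rVert_{\mathbb{C}^m\rightarrow\mathcal{H}}\le 1$. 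On the $\mathcal{V}^\perp$-component the paper repeats the same pseudoinverse trick with $((\mathcal{I-P}_\mathcal{V})F)^+$ and $\lVert(\mathcal{I-P}_\mathcal{V})\hat Q_k\rVert_{\mathbb{C}^m\rightarrow\mathcal{H}}\le 1$, never invoking dominance; you instead recover the coefficient $y$ from the $\mathcal{V}$-part by inverting the restriction of $\mathcal{\hat P}_\mathcal{V}^k$ to the reducing subspace $\mathcal{V}$, then cancel the resulting growth $|\lambda_m|^{-k}$ against the decay $\lVert\mathcal{\hat P}_W^k\rVert_\mathcal{H}=\rho(\mathcal{\hat P}_W)^k$ furnished by normality of $s(\mathcal{L})$ and the dominance condition~\cref{eqn:dominant eigenspace}. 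Your route buys a marginally sharper constant ($\sqrt{2}/\sigma_{\min}(\mathcal{P}_\mathcal{V}F)$ in place of $1/\sigma_{\min}(\mathcal{P}_\mathcal{V}F)+1/\sigma_{\min}((\mathcal{I-P}_\mathcal{V})F)$), does not require $(\mathcal{I-P}_\mathcal{V})F$ to have linearly independent columns, and sidesteps the paper's use of a matrix $D_2$ with $\mathcal{\hat P}_\mathcal{V}^k(\mathcal{I-P}_\mathcal{V})F=(\mathcal{I-P}_\mathcal{V})FD_2^k$, which presumes that the $m$-dimensional subspace ${\rm range}((\mathcal{I-P}_\mathcal{V})F)$ is itself invariant under $\mathcal{\hat P}_\mathcal{V}$; the price is that your bound leans on normality and on the dominance hypothesis, which the paper's version of this particular estimate does not use. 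Your closing justification of the commutation $\mathcal{L}\mathcal{\hat P}_W^kF_Wy=\mathcal{\hat P}_W^k\mathcal{L}F_Wy$ (resolvents map $\mathcal{H}$ into $\mathcal{D}(\mathcal{L})$ and commute with $\mathcal{L}$ there) correctly addresses the one point where unboundedness could cause trouble and matches the paper's appeal to the invariance of the spectral subspaces.
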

\begin{proof}
Since $\hat Q_k$ is an orthonormal basis for $\mathcal{\hat P}_\mathcal{V}^kF$, there is a matrix $R_k\in\mathbb{C}^{m\times m}$ such that $\mathcal{\hat P}_\mathcal{V}^kF=\hat Q_kR_k$. By the assumption that $\mathcal{\hat P}_\mathcal{V}^kF$ has linearly independent columns, we know that $R_k$ is invertible. We obtain that 
\begin{equation}
\label{eqn:kth basis}
\hat Q_k=\mathcal{\hat P}_\mathcal{V}^kFR_k^{-1}.
\end{equation} 
We use the spectral projector $\mathcal{P}_\mathcal{V}$ to rewrite~\cref{eqn:kth basis} as
\begin{equation}
\label{eqn:kth basis decomposed}
\hat Q_k=\mathcal{\hat P}_\mathcal{V}^k\left(\mathcal{P}_\mathcal{V}F+(\mathcal{I-P}_\mathcal{V})F\right)R_k^{-1}.
\end{equation}

Now, ${\rm range}(\mathcal{P}_\mathcal{V}F)$ and ${\rm range}((\mathcal{I-P}_\mathcal{V})F)$ are invariant under $\mathcal{\hat P}_\mathcal{V}$~\cite[p.~178]{kato2013perturbation}. Consequently, there are matrices $D_1,D_2\in\mathbb{C}^{m\times m}$ such that
\begin{equation}
\label{eqn:invariant powers}
\mathcal{\hat P}_\mathcal{V}^k\mathcal{P}_\mathcal{V}F=\mathcal{P}_\mathcal{V}FD_1^k, \qquad \mathcal{\hat P}_\mathcal{V}^k(\mathcal{I-P}_\mathcal{V})F=(\mathcal{I-P}_\mathcal{V})FD_2^k.
\end{equation}
Substituting~\cref{eqn:invariant powers} into~\cref{eqn:kth basis decomposed} yields the following useful equation for $\hat Q_k$:
\begin{equation}
\label{eqn:useful Q_k}
\hat Q_k=\left(\mathcal{P}_\mathcal{V}FD_1^k+(\mathcal{I-P}_\mathcal{V})FD_2^k\right)R_k^{-1}.
\end{equation}
Applying $\mathcal{L}$ to both sides of~\cref{eqn:useful Q_k} and commuting with the spectral projectors $\mathcal{P}_\mathcal{V}$ and $\mathcal{I-P}_\mathcal{V}$~\cite[p.~179]{kato2013perturbation}, we obtain
\begin{equation}
\label{eqn:useful LQ_k}
\mathcal{L}\hat Q_k=\left(\mathcal{P}_\mathcal{V}\mathcal{L}FD_1^k+(\mathcal{I-P}_\mathcal{V})\mathcal{L}FD_2^k\right)R_k^{-1}.
\end{equation}

Since ${\rm range}(F)\subset\mathcal{D}(\mathcal{L})$, we have that $\lVert\mathcal{L}F\rVert_{\mathbb{C}^m\rightarrow\mathcal{H}}<\infty$. Additionally, since $\mathcal{L}$ is normal, the spectral projectors have norms equal to $1$~\cite[p.~277]{kato2013perturbation}. Therefore, it remains to find a uniform bound for $\lVert D_1^kR_k^{-1}\rVert_{\mathbb{C}^m}$ and $\lVert D_2^kR_k^{-1}\rVert_{\mathbb{C}^m}$ as $k\rightarrow\infty$. 

For brevity, we prove uniform boundedness of $\lVert D_1^kR_k^{-1}\rVert_{\mathbb{C}^m}$ and note that the proof for $\lVert D_2^kR_k^{-1}\rVert_{\mathbb{C}^m}$ is essentially identical. We begin by commuting $\mathcal{\hat P}_\mathcal{V}$ with the spectral projectors in~\cref{eqn:invariant powers} and substituting the QR factorization of $\mathcal{\hat P}_\mathcal{V}^kF$ to see that
\begin{equation}
\label{eqn:convergence}
\mathcal{P}_\mathcal{V}\hat Q_k=(\mathcal{P}_\mathcal{V}F)D_1^kR_k^{-1}.
\end{equation}
Using the psuedoinverse $(\mathcal{P}_\mathcal{V}F)^+$ of the quasimatrix\footnote{The pseudoinverse of a quasimatrix $A:\mathbb{C}^m\rightarrow\mathcal{H}$ may be defined via the SVD as $A^+=V\Sigma^{+}U^*$, where $\Sigma^+$ is the diagonal matrix with entries $\Sigma^+_{ii}=1/\sigma_i$ if $\sigma_i\neq 0$ and $0$ otherwise. It is easy to verify familiar properties from the matrix case~\cite[p.~290]{golub2012matrix}, i.e., if $A$ has linearly independent columns, then $A^+A=I$ and $\lVert A^+\rVert_{\mathcal{H}\rightarrow\mathbb{C}^m}=1/\sigma_{\min}(A)$.} $\mathcal{P}_\mathcal{V}F$ and noting that $\mathcal{P}_\mathcal{V}F$ has linearly independent columns,~\cref{eqn:convergence} implies that
\begin{equation}
\label{eqn:D^kR_k}
D_1^kR_k^{-1}=(\mathcal{P}_\mathcal{V}F)^+\mathcal{P}_\mathcal{V}\hat Q_k.
\end{equation}

Now, we know that $\lVert\mathcal{P}_\mathcal{V}\hat Q_k\rVert_\mathcal{H}\leq 1$, because $\lVert\mathcal{P}_\mathcal{V}\rVert_\mathcal{H}=1$ and $\hat Q_k$ has orthonormal columns. We conclude that
\begin{equation}
\label{eqn:bounded D1^kR_k}
\lVert D_1^kR_k^{-1}\rVert_{\mathbb{C}^m}\leq\frac{1}{\sigma_{\min}(\mathcal{P}_\mathcal{V}F)}.
\end{equation}
A similar argument shows that
\begin{equation}
\label{eqn:bounded D2^kR_k}
\lVert D_2^kR_k^{-1}\rVert_{\mathbb{C}^m}\leq\frac{1}{\sigma_{\min}((\mathcal{I-P}_\mathcal{V})F)}.
\end{equation}
Taking norms in~\cref{eqn:useful LQ_k} and substituting the bounds from~\cref{eqn:bounded D1^kR_k} and~\cref{eqn:bounded D2^kR_k}, we find
\begin{equation}
\label{eqn:full thm bound}
\lVert\mathcal{L}\hat Q_k\rVert_{\mathbb{C}^m\rightarrow\mathcal{H}}\leq\lVert\mathcal{L}F\rVert_{\mathbb{C}^m\rightarrow\mathcal{H}}\left(\frac{1}{\sigma_{\min}(\mathcal{P}_\mathcal{V}F)}+\frac{1}{\sigma_{\min}((\mathcal{I-P}_\mathcal{V})F)}\right).
\end{equation}
The lemma follows immediately from~\cref{eqn:full thm bound}.
\end{proof}

\Cref{thm:rational subspace iteration for diff ops},~\cref{thm:pseudospectra inclusion 2}, and~\cref{lem:L is UB on Q_k} provide a preliminary analysis to explain why Algorithm~\ref{alg:contFEAST} accurately computes the eigenvalues of normal operators with a dominant eigenspace $\mathcal{V}$. \Cref{thm:rational subspace iteration for diff ops} allows us to accurately resolve an orthonormal basis $Q$ for $\mathcal{V}$ by refining the quasimatrix $\hat Q_k$ with subspace iteration. \Cref{lem:L is UB on Q_k} confirms that $\mathcal{L}\hat Q_k$ does not grow without bound as $\hat Q_k$ is refined. Finally,~\cref{thm:pseudospectra inclusion 2} demonstrates that the eigenvalues are computed to the expected accuracy, provided that the basis for $\mathcal{V}$ has been resolved.

\section{An operator analogue of the Rayleigh Quotient Iteration}\label{sec:continuous RQI}
It is useful to have operator analogues for other eigensolvers too; particularly, when the eigenvalues of interest are difficult to target with a pre-selected search region $\Omega\subset \mathbb{C}$. The Rayleigh Quotient Iteration (RQI) is a generalization of the inverse iteration that incorporates dynamic shifting to obtain cubic (for Hermitian problems) or quadratic (non-Hermitian problems) convergence~\cite{parlett1974rayleigh}. Given a matrix $A\in\mathbb{C}^{n\times n}$ and an initial vector $\tilde y_0\in\mathbb{C}^n$, RQI computes the iterates 
\begin{equation}
\label{eqn: RQI}
\tilde y_{k+1}=(A-\beta_kI)^{-1}y_k, \qquad \beta_k=y_k^*Ay_k,\qquad y_k=\frac{\tilde y_k}{\lVert \tilde y_k\rVert_2},\qquad k=0,1,2,\dots.
\end{equation}
The vectors $y_k$ typically converge to a nearby eigenvector of $A$, while the sequence $\beta_k$ converges to the associated eigenvalue of $A$~\cite{ostrowski1957convergence}. In the matrix setting,~\cref{eqn: RQI} is often used to compute interior eigenvalues or refine an estimate of an invariant subspace~\cite{pantazis1995regions,parlett1974rayleigh}. 

Replacing a matrix $A$ by a differential operator $\mathcal{L}:\mathcal{D}(\mathcal{L})\rightarrow\mathcal{H}$, as in~\cref{eqn:differential eigenvalue problem}, and the vectors $\tilde y_k$ by functions $f_k\in\mathcal{D}(\mathcal{L})$, we obtain an operator analogue of RQI.
One needs to select an initial function $f_0\in\mathcal{D}(\mathcal{L})$ and solve a sequence of ODEs, i.e.,
\begin{equation}
\label{eqn: contRQI}
(\mathcal{L}-\beta_k I)f_{k+1}=f_k, \qquad f_{k+1}(\pm 1)=\dots=f_{k+1}^{(N/2)}(\pm 1)=0.
\end{equation}
At each iteration, the shift $\beta_k$ is computed from the Rayleigh Quotient $(f_k,\mathcal{L}f_k)_\mathcal{H}$ (in strong form) and the solution $f_{k+1}$ is normalized after each iteration. Analogous to the matrix setting, we observe that the operator analogue of the Rayleigh Quotient Iteration converges cubically for self-adjoint operators and quadratically otherwise~\cite{ChebfunExample}.

We note that block generalizations of RQI (RSQR and GRQI~\cite{absil2004cubically}) are also easily extended to the differential operator setting. In this case, a sequence of quasimatrices $\hat Q_k$ with $\mathcal{H}$-orthonormal columns are generated to approximate an invariant subspace of $\mathcal{L}$ and a Rayleigh--Ritz projection is performed to compute approximate eigenvalues and eigenvectors. As with the operator analogue of FEAST,~\cref{thm:pseudospectra inclusion 2} implies that the iteration~\cref{eqn: contRQI} accurately computes eigenvalues of normal differential operators when the basis for the target eigenspace is sufficiently resolved. 

\subsection{Free vibrations of an airplane wing}\label{subsubsec:beam eqn}
\begin{figure}[!tbp]
\label{fig:beam eqn}
  \centering
  \begin{minipage}[b]{0.72\textwidth}
    \begin{overpic}[width=\textwidth]{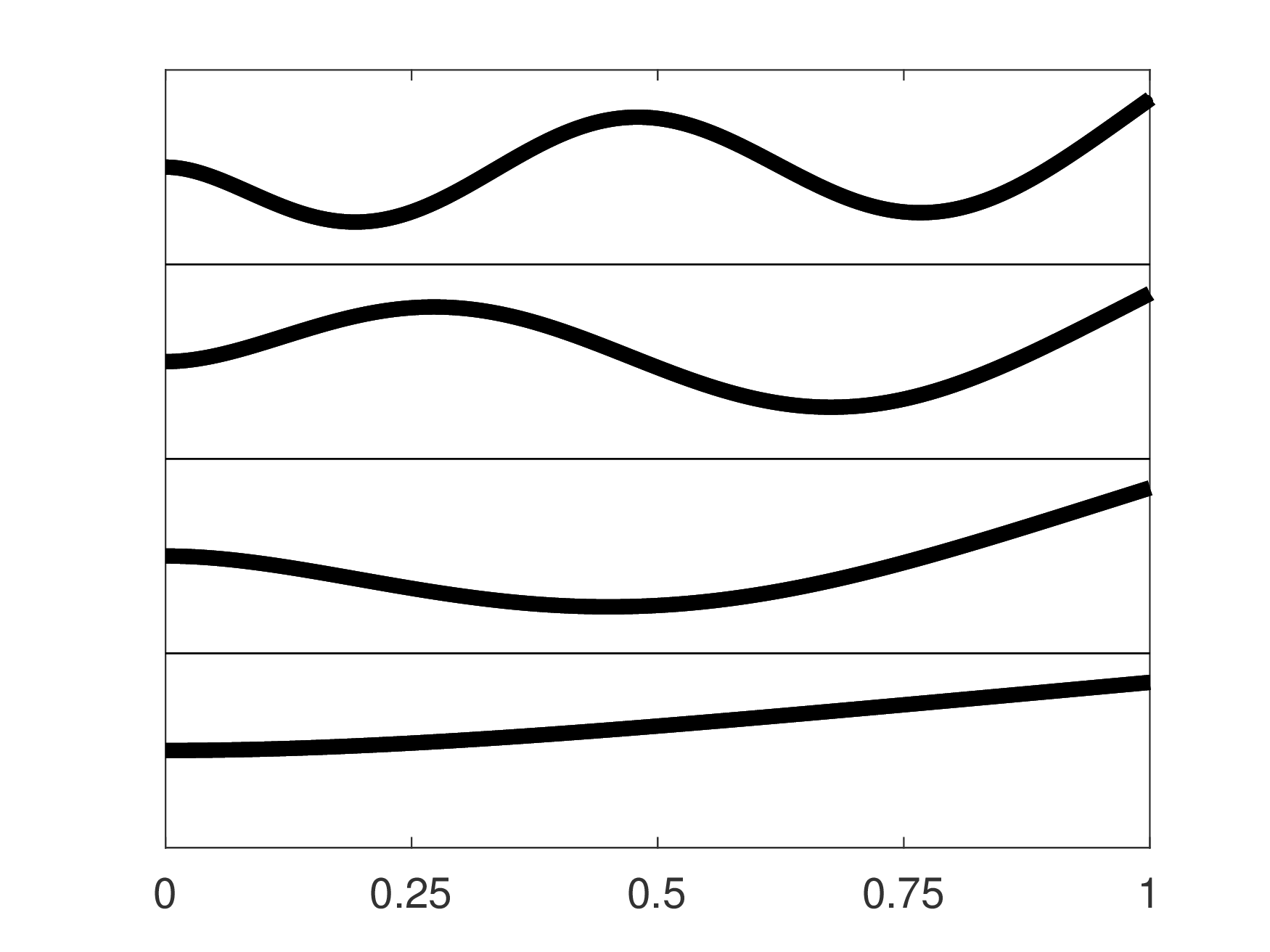}
 	\put (49,-1) {$x/L$}
 	\put (14,20.6) {$\lambda_1\approx 3.759$}
 	\put (14,35.72) {$\lambda_2\approx 178.4$}
 	\put (14,51.2) {$\lambda_3\approx 1470$}
 	\put (14,66.45) {$\lambda_4\approx 5712$}
	\end{overpic}
  \end{minipage}
  \caption{Selected free-vibration modes of an airplane wing modeled by~\cref{eqn:cantilever beam}.}
\end{figure}

The improved convergence rate of RQI can offer much faster computation time than subspace iteration, often requiring only $3$ or $4$ ODE solves to reach an accuracy of essentially machine precision~\cite{ChebfunExample}. We now employ~\cref{eqn: contRQI} for the rapid computation of vibrational modes of an airplane wing.

An airplane wing may be crudely modeled as a thin, cantilevered beam of length $L$ with a linear taper. The governing equation for free vibrations is~\cite{han1999dynamics}
\begin{equation}
\label{eqn:cantilever beam}
\frac{d^2}{dx^2}\! \left((1+x)\frac{d^2u}{dx^2}\right)=\lambda u, \quad u(0)=u'(0)=0, \quad u''(L)=u'''(L)=0.
\end{equation}
The variable coefficient $1+x$ accounts for the linear taper of the wing, while the boundary conditions on $u''$ and $u'''$ at $x=1$ express the natural requirement that the bending moment and shear force vanish at the endpoint.

To compute a few of the smoothest modes of~\cref{eqn:cantilever beam} we use the eigenfunctions $w_n$ of the cantilevered beam equation with constant coefficients, given in closed form by~\cite{han1999dynamics}
\begin{equation}
\label{eqn:constant coeff beam}
w_n(x)=\cosh\beta_n x-\cos\beta_n x+\frac{\cos\beta_n L+\cosh\beta_n L}{\sin\beta_n L+\sinh\beta_n L}(\sin\beta_n x+\sinh\beta_n x).
\end{equation}
Here $\beta_n$ is the $n$th root of $g(\beta)=\cosh(\beta L)\cos(\beta L)+1$~\cite{han1999dynamics}. We target a mode of~\cref{eqn:cantilever beam} by setting $f_0(x)=w_n(x)$. \Cref{fig:beam eqn} shows the modes that are computed using initial guesses $w_1,\dots,w_4$, corresponding to the smallest four positive roots of $g(\beta)$.

\section{Computing eigenvalues in unbounded regions}\label{sec:unbounded regions}
The stability analysis of solutions to time-dependent partial differential equations (PDEs) provides an abundant source of differential eigenvalue problems. Consider the initial boundary value problem (IBVP) with periodic boundary conditions
\begin{equation}
\label{eqn:NTPDE}
u_t=\mathcal{L}u+\mathcal{N}(u), \qquad u_t(x,0)=g(x), \qquad u(-1,t)=u(1,t).
\end{equation}
Here, $\mathcal{L}$ and $\mathcal{N}$ are linear and nonlinear ordinary differential operators (with respect to the variable $x$), respectively.
In many instances,~\cref{eqn:NTPDE} supports steady-states, traveling wave states, or other phenomena whose stability is of critical importance in the physical problem under study~\cite{laugesen2000properties,alikakos1999periodic,sanford2014stability}. When $\mathcal{L}$ is self-adjoint or normal, the stability analysis often reduces to determining whether or not the eigenvalues of $\mathcal{L}$ are contained in one half-plane~\cite{laugesen2000linear,alikakos1999periodic,sanford2014stability,trefethen2005spectra}. We now show how to modify the spectral projector in~\cref{eqn:spectral projector for diff op} to derive a practical rational filter to compute (finitely many) eigenvalues of $\mathcal{L}$ in the right half-plane.

\subsection{A rational filter for the half-plane}\label{subsec:half-plane}
Let $\mathcal{L}$ be a closed linear operator that is densely defined on a Hilbert space $\mathcal{H}$. Suppose that $\mathcal{L}$ is a normal operator with a spectrum in the left half-plane ${\rm Re}(z)<0$ except for finitely many eigenvalues $\lambda_1,\dots,\lambda_m$ (including multiplicities) such that ${\rm Re}(\lambda_i)>0$ for $1\leq i \leq m$. Denote the eigenspace associated with $\lambda_1,\dots,\lambda_m$ by $\mathcal{V}$ and consider search regions that are semi-circles of radius $R$, i.e.,
\begin{equation}
\label{eqn:half-disk}
\Omega_R=\{z\in\mathbb{C} : |z|<R, {\rm Re}(z)>0\}, \qquad R>\text{max}_{1\leq i\leq m}|\lambda_i|.
\end{equation}
To construct a computable spectral projector onto the right half-plane we consider taking $R\rightarrow\infty$. We adopt the following strategy:
\begin{itemize}[leftmargin=*,noitemsep]
\item[(i)] Introduce a $1/R$ decay into the integrand of the spectral projector~\cref{eqn:spectral projector for diff op} as $R\rightarrow\infty$, while preserving the projection onto $\mathcal{V}$.

\item[(ii)] Split the projector into an integral over the vertical part of $\partial\Omega_R$ and an integral over the circular arc of $\partial\Omega_R$. By taking $R\rightarrow\infty$, we observe that the contribution from the circular arc goes to $0$ due to the additional $1/R$ decay in the integrand.

\item[(iii)] Map the imaginary axis to the interval $[-1,1]$ and approximate the spectral projector by a quadrature rule.
\end{itemize}
\begin{figure}[!tbp]
\label{fig:RF HP}
  \centering
  \begin{minipage}[b]{0.49\textwidth}
    \centering
 \begin{tikzpicture}
   \draw[black,thick,->] (0,2)--(3,2);
   \draw[black,thick,->] (1,0)--(1,4);
   \draw(1,3.8) node[anchor=east] {$R$};
   \draw(1,0.2) node[anchor=east] {$-R$};
   \draw(3,2) node[anchor=west] {${\rm Re}$};
   \draw(1,4) node[anchor=south] {${\rm Im}$};
    \clip (1,0) rectangle (4,4);
    \draw (1,2) circle(1.8);
    \draw(2.5,1.2) node[anchor=east] {$\Omega_R$};
  \end{tikzpicture}
  \end{minipage}
  \hfill
  \begin{minipage}[b]{0.49\textwidth}
    \begin{overpic}[width=\textwidth]{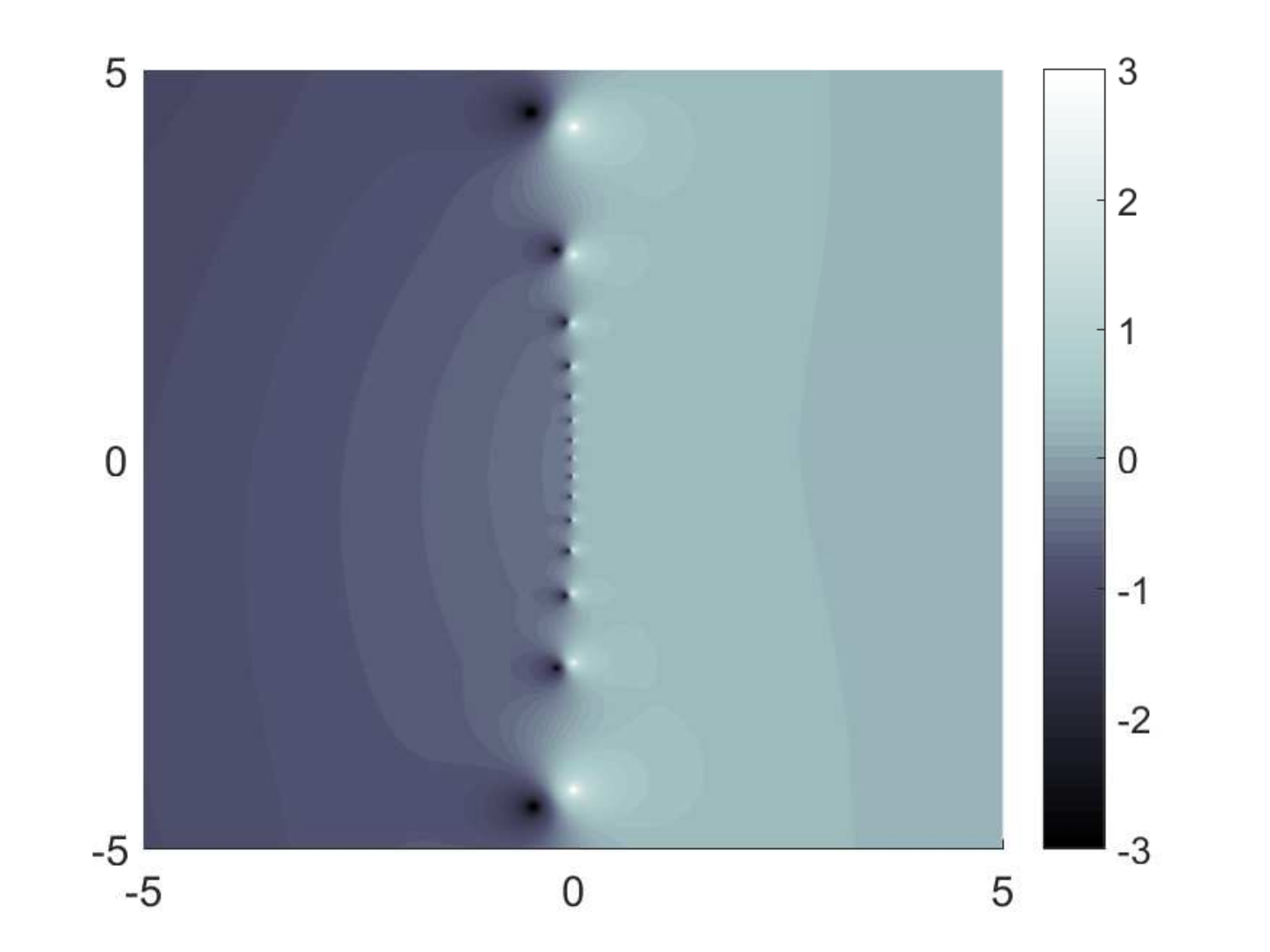}
 	\put (38,-3) {$\displaystyle{\rm Real}(z)$}
 	\put (-1,28) {\rotatebox{90} {$\displaystyle{\rm Imag}(z)$}}
	\end{overpic}
  \end{minipage}
  \caption{Left: The region $\Omega_R$ from~\cref{eqn:half-disk} used in the derivation of the rational filter over the right half-plane. Right: The constructed rational filter~\cref{eqn:HP RF} for the right half-plane with $\ell=20$.}
\end{figure}

Select $a\in\mathbb{R}^+$ and consider the family of functions that are analytic in the right half-plane defined by
\begin{equation}
\label{eqn:HP projector}
\mathcal{P}_R(\lambda)=\frac{1}{2\pi i}\int_{\partial\Omega_R}(z+a)^{-1}(z-\lambda)^{-1}\, dz.
\end{equation}
By Cauchy's Integral Formula, we know that $\mathcal{P}_R(\lambda)=(\lambda+a)^{-1}$ if $\lambda\in\Omega_R$ and is zero otherwise~\cite{stein2003complex}. Taking the limit $R\rightarrow\infty$, we obtain
\begin{equation}
\label{eqn:HP proj formula}
\mathcal{P}(\lambda)=\lim_{R\rightarrow\infty}\mathcal{P}_R(\lambda)=\frac{1}{2\pi}\int_{-\infty}^\infty (iy+a)^{-1}(iy-\lambda)^{-1}\, dy. 
\end{equation}
Using functional calculus for unbounded normal operators we can extend $\mathcal{P}(\lambda)$ to an operator-valued function $\mathcal{P}(\mathcal{L})$~\cite[Theorem 13.24]{rudin1991functional}.\footnote{In~\cite[Theorem 13.24]{rudin1991functional}, $E_{x,y}$ is the spectral measure of $\mathcal{L}$~\cite[Theorem 13.33]{rudin1991functional}.} Moreover, we have that $\mathcal{P}(\mathcal{L})u=\mathcal{P}(\lambda)u$ when $\mathcal{L}u=\lambda u$~\cite[VIII.5]{reed1980methods}. Consequently, ${\rm range}(\mathcal{P}(\mathcal{L}))=\mathcal{V}$.

Now, take the change-of-variables $x=\frac{2}{\pi}\tan^{-1} y$ in~\cref{eqn:HP proj formula} to obtain
\begin{equation}
\label{eqn:HP int}
\mathcal{P}(\mathcal{L})=\frac{1}{4}\int_{-1}^1\left(i\tan\!\left(\frac{\pi x}{2}\right)+a\right)^{-1}\left(i\tan\!\left(\frac{\pi x}{2}\right)\mathcal{I}-\mathcal{L}\right)^{-1}\sec^2\!\left(\frac{\pi x}{2}\right)\, dx.
\end{equation}
Using Gauss--Legendre quadrature nodes $x_1,\dots,x_\ell$ and weights $w_1,\dots,w_\ell$ on $[-1,1]$, we can approximate $\mathcal{P}(\mathcal{L})$ by
\begin{equation}
\label{eqn:HP RF}
\mathcal{\hat P}(\mathcal{L})=\frac{1}{4}\sum_{k=1}^\ell w_k\frac{1-z_k^2}{z_k+a}(z_k\mathcal{I-L})^{-1},\qquad  z_k=i\tan\!\left(\frac{\pi x_k}{2}\right).
\end{equation}
\Cref{fig:RF HP} (right) shows the derived rational filter $\mathcal{\hat P}(\lambda)$ in the complex plane. 

\subsection{Stability of thin fluid films}\label{subsec:thin fluid films}
To demonstrate the utility of the filter in~\cref{eqn:HP RF}, we assess the stability of the steady-state solutions to a PDE governing the motion of a thin film of fluid supported below by a flat substrate. The PDE is
\begin{equation}
\label{eqn:thin fluid PDE}
u_t=\partial_x^4u+\partial_x(u\partial_x u), 
\end{equation}
where $u(x,t)$ is a positive, periodic function representing the thickness of the fluid~\cite{laugesen2000properties}. The nonlinear term models gravitational effects and substrate-fluid interactions~\cite{laugesen2000properties}.

\begin{figure}[!tbp]
\label{fig:thin-film stability}
  \centering
  \begin{minipage}[b]{0.45\textwidth}
    \begin{overpic}[width=\textwidth]{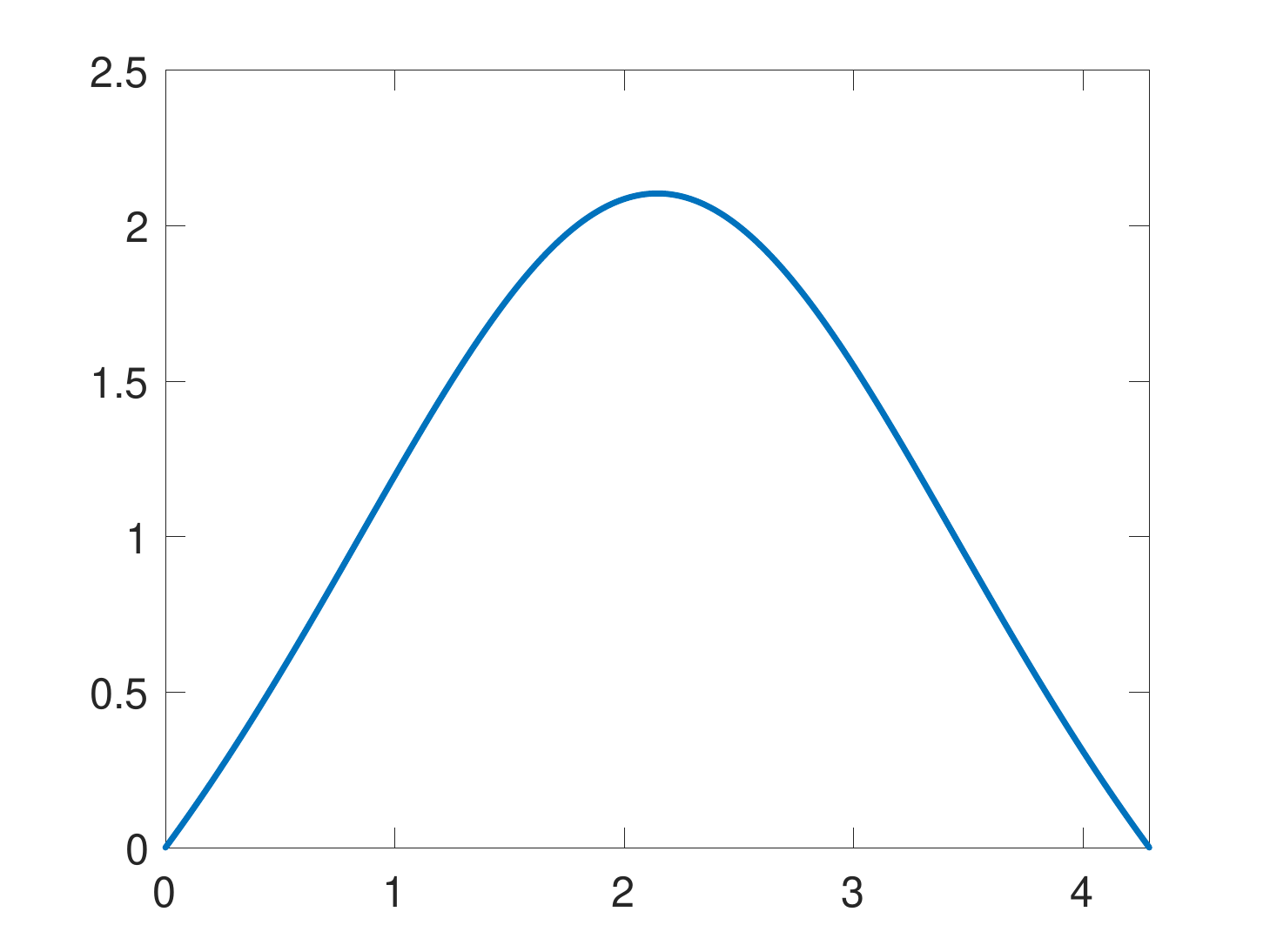}
 	\put (48,-1) {$\displaystyle x$}
 	\put(46,72) {$u_{\text{ss}}$}
	\end{overpic}
  \end{minipage}
  \hfill
  \begin{minipage}[b]{0.45\textwidth}
    \begin{overpic}[width=\textwidth]{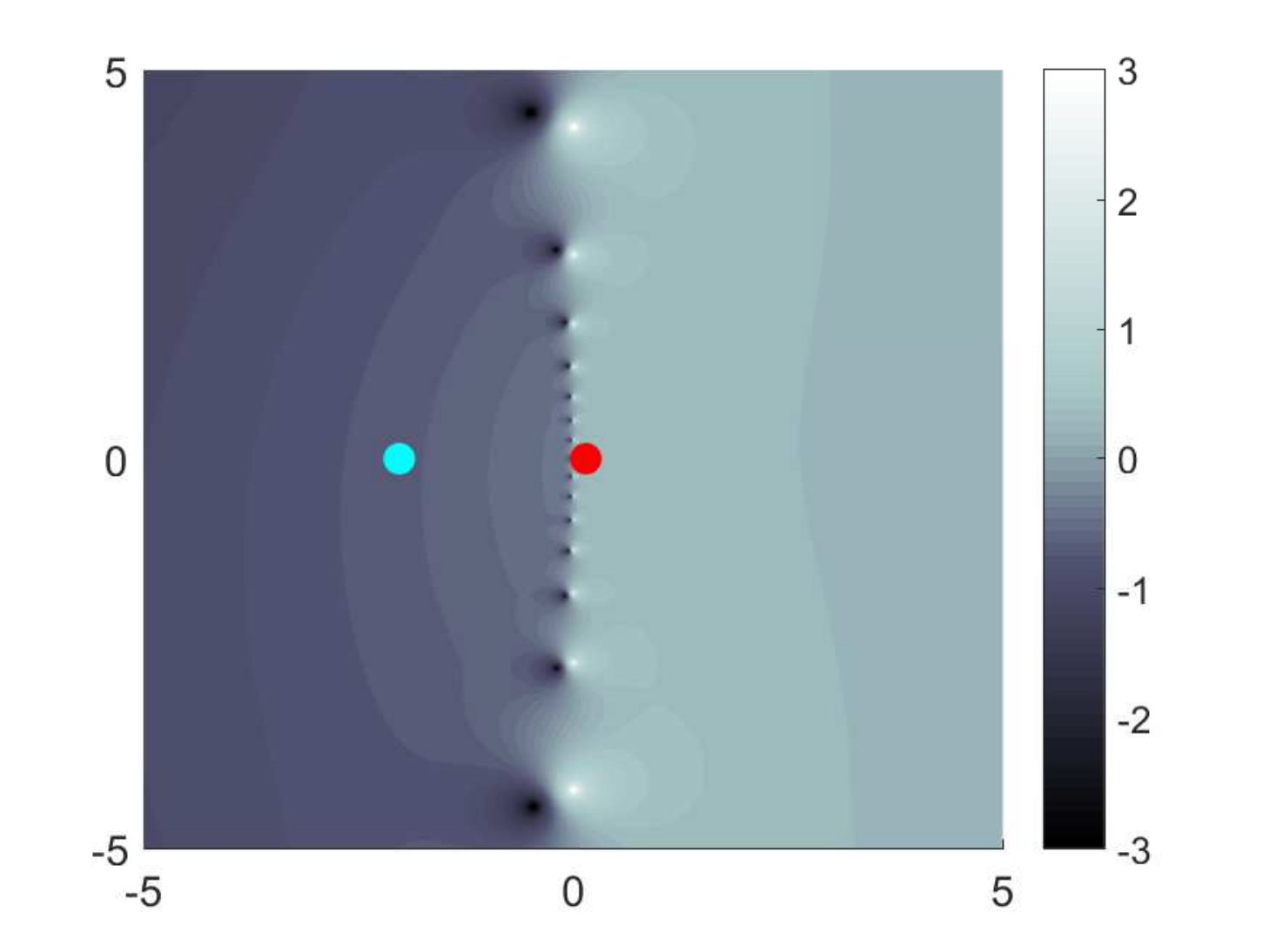}
 	\put (38,-3) {$\displaystyle{\rm Real}(z)$}
 	\put (-1,28) {\rotatebox{90} {$\displaystyle{\rm Imag}(z)$}}
	\end{overpic}
  \end{minipage}
  \caption{Left: A droplet, $u_{\text{ss}}$, which is a steady-state solution to~\cref{eqn:thin fluid PDE}, computed from the IVP~\cref{eqn:steady-state}. Right: Two rightmost eigenvalues (blue and red dots) of~\cref{eqn:stability of touchdown states} together with a log-scale colormap of the rational filter in~\cref{eqn:HP RF} with $\ell=20$, which is used in place of~\cref{eqn:approximate spectral projector}. The eigenvalue with a positive real part (red dot) indicates that this steady-state droplet is unstable.}
\end{figure}

A droplet steady-state $u_{\text{ss}}(x)$ of~\cref{eqn:thin fluid PDE}, rescaled so that it is supported on $[0, l]$ with contact angle $\pi/4$, is stable if all the eigenvalues of a fourth-order differential operator are in the left half-plane. The associated differential eigenproblem is~\cite{laugesen2000linear}
\begin{equation}
\label{eqn:stability of touchdown states}
\frac{d^4u}{dx^4}+\frac{d}{dx}\!\left(u_{\text{ss}}\frac{du}{dx}\right)=\lambda u, \qquad u(0)=u(l)=0, \quad u''(0)=u''(l)=0.
\end{equation}
We compute the steady-state $u_{\text{ss}}(x)$ by solving the second order nonlinear ODE~\cite{laugesen2000properties}
\begin{equation}
\label{eqn:steady-state}
\frac{du_{\text{ss}}}{dx}+\frac{1}{2}u_{\text{ss}}^2-\delta=0, \qquad u_{\text{ss}}(0)=0,\quad u_{\text{ss}}'(0)=1.
\end{equation}
Here, $\delta$ is a dimensionless quantity relating the rescaled problem to the original contact angle~\cite{laugesen2000properties}. The length $l$ of the droplet's base and $\delta$ may be calculated analytically~\cite{laugesen2000properties}.

In~\cref{fig:thin-film stability}, we show an approximation to the rescaled steady-state $u_{\text{ss}}$ along with the right-most eigenvalues of~\cref{eqn:stability of touchdown states}. Using the rational filter in~\cref{eqn:HP RF} with $\ell=20$ (the degree of the quadrature rule defining the filter) to perform the approximate spectral projection in Algorithm~\ref{alg:contFEAST}, we are able to identify an eigenvalue of~\cref{eqn:stability of touchdown states} in the right half-plane, which indicates that the droplet (see~\cref{fig:thin-film stability} (left)) is unstable.

Techniques for selecting the dimension $m$ of the subspace $\mathcal{V}$~\cite{SubspaceIter,kestyn2016feast} are important in stability analysis as one is trying to determine the number of eigenvalues in the right half-plane. To select $m$, we monitor the singular values of the matrix $\hat V^*\hat V$ after each iteration and adjust the number of basis functions by removing columns of $\hat V$ associated with singular values that are close to machine precision (relative to the largest singular value)~\cite{SubspaceIter}. This procedure usually allows us to capture the dominant eigenspace of the filtered operator $\mathcal{\hat P}(\mathcal{L})$ that includes the target eigenspace as well as any eigenvalues clustered near the imaginary axis. We then determine whether there are any eigenvalues in the right half-plane by sorting through the computed eigenvalues. However, this strategy may break down, for instance, if there is an eigenvalue close to a quadrature node. Additionally, the sharp decay of the filter~\cref{eqn:HP RF} across the imaginary axis is softened as $\lvert{\rm Im}(z)\rvert\rightarrow\infty$, which can lead to difficulties when there are clusters of eigenvalues near the imaginary axis with large imaginary part. In this case, one may need to take a large number of basis functions to accurately resolve the dominant eigenvalues of $\mathcal{\hat P}(\mathcal{L})$.

\section*{Conclusions}\label{sec:conclusions}
An operator analogue of the FEAST matrix eigensolver is derived to solve differential eigenvalue problems without discretizing the operator. This approach leads to an algorithm that can exploit spectrally accurate techinques for computing with functions while preserving the structure of $\mathcal{L}$. The result is an efficient, automated, and accurate eigensolver for normal and self-adjoint differential operators. This eigensolver is adept in the high-frequency regime and may provide a new direction towards robust high-frequency eigenvalue computations. 

The implementation described in~\cref{sec:practical algorithm} extends to higher dimensions in a straightforward way for simple geometries where spectral methods apply~\cite{townsend2015automatic}. In the case of more complicated geometries, one may still benefit from the advantages of the ``solve-then-discretize" framework outlined in Algorithm~\ref{alg:contFEAST} provided that one is able to accurately compute inner products and solutions to the shifted linear differential equations. Although we have focused on the strong form of the eigenvalue problem in~\cref{eqn:differential eigenvalue problem}, it may be necessary to work with the weak form. In this case, one can still follow Algorithm~\ref{alg:contFEAST} provided that the shifted linear systems are solved in weak form and the Rayleigh--Ritz projection is performed with the associated sesquilinear form.

\section*{Acknowledgements}
We thank Anthony Austin for providing detailed feedback on the results in~\cref{sec:contFEAST} and on our rational filter in~\cref{sec:unbounded regions}. We thank David Bindel for helpful discussions about spectral discretizations of differential eigenvalue problems. Finally, we thank Marcus Webb, Sheehan Olver, Yuji Nakatsukasa, Heather Wilber, and Matthew Colbrook for providing comments on a draft.

\bibliography{draft}
\bibliographystyle{plain}
\end{document}